\newcommand{\bq}{\begin{equation}}
\newcommand{\eq}{\end{equation}}
\newcommand{\pa}{\partial}
\newcommand{\R}{{ \mathbb{R}  }}
\newcommand{\bbr}{{ \mathbb{R}  }}
\newcommand{\calN}{{ \mathcal N  }}
\newcommand{\calK}{{ \mathcal K  }}
\newcommand{\bke}[1]{\left( #1 \right)}
\newcommand{\norm}[1]{\left\Vert #1 \right\Vert}
\newcommand{\abs}[1]{\left| #1 \right|}
\newcommand{\na}{\nabla}
\newcommand {\al}{\alpha}
\newcommand{\Del}{\Delta}
\begin{document}
\bibliographystyle{plain}


\newtheorem{defn}{Definition}
\newtheorem{lemma}[defn]{Lemma}
\newtheorem{proposition}{Proposition}
\newtheorem{theorem}[defn]{Theorem}
\newtheorem{cor}{Corollary}
\newtheorem{remark}{Remark}
\numberwithin{equation}{section}

\def\Xint#1{\mathchoice
   {\XXint\displaystyle\textstyle{#1}}%
   {\XXint\textstyle\scriptstyle{#1}}%
   {\XXint\scriptstyle\scriptscriptstyle{#1}}%
   {\XXint\scriptscriptstyle\scriptscriptstyle{#1}}%
   \!\int}
\def\XXint#1#2#3{{\setbox0=\hbox{$#1{#2#3}{\int}$}
     \vcenter{\hbox{$#2#3$}}\kern-.5\wd0}}
\def\ddashint{\Xint=}
\def\dashint{\Xint-}
\def\aint{\Xint\diagup}

\newenvironment{proof}{{\bf Proof.}}{\hfill\fbox{}\par\vspace{.2cm}}
\newenvironment{proof-mainthm}{{\par\noindent\bf
            Proof of Theorem \ref{thm1} }}{\hfill\fbox{}\par\vspace{.2cm}}
\newenvironment{pfthm2}{{\par\noindent\bf
            Proof of Theorem  \ref{thm2} }}{\hfill\fbox{}\par\vspace{.2cm}}
\newenvironment{pfthm3}{{\par\noindent\bf
Proof of Theorem \ref{thm3}} }{\hfill\fbox{}\par\vspace{.2cm}}
\newenvironment{pfthm4}{{\par\noindent\bf
Sketch of proof of Theorem \ref{Theorem6}.
}}{\hfill\fbox{}\par\vspace{.2cm}}
\newenvironment{pfthm5}{{\par\noindent\bf
Proof of Theorem 5. }}{\hfill\fbox{}\par\vspace{.2cm}}
\newenvironment{pflemsregular}{{\par\noindent\bf
            Proof of Lemma \ref{sregular}. }}{\hfill\fbox{}\par\vspace{.2cm}}

\title{  Global Well-posedness and Long Time Behaviors of Chemotaxis-Fluid System Modeling Coral Fertilization}
\author{Myeongju Chae, Kyungkeun Kang and Jihoon Lee}

\date{}

\maketitle
\begin{abstract}
 We consider generalized models on coral broadcast spawning phenomena involving diffusion, advection, chemotaxis, and reactions when egg and sperm densities are different.
We prove the global-in-time existence of the regular solutions of the models as well as their temporal decays in two and three dimensions. We also show that  the total masses of egg and sperm density have  positive lower bounds as time tends to infinity in three dimensions.\\  
{\bf 2010 AMS Subject Classification}: 35Q30, 35K57, 76Dxx, 76Bxx
\newline {\bf Keywords}: Chemotaxis, global
well-posedness, reaction, diffusion, biomixing.
\end{abstract}

\section{Introduction}
\setcounter{equation}{0}
In this paper, we study the interaction between reactions and chemotaxis in the mathematical model of the  broadcast spawining phenomenon. Broadcast spawning is a fertilization strategy used by many sea animals, like sea urchins and corals(see \cite{Coll1, Coll2, Lasker}). In contrast with the numerical simulations based on the turbulent eddy diffusivity, the field measurements indicate that fertilization rates are often extremely as high as 90\%(see \cite{Den-Shi, Eck} and references therein) and it seems plausible that the chemotaxis emitted by the egg gametes play an important role in these high fertilization rates.\\
\indent The simplest and most classical models of chemotaxis equations describing the collective motion of cells or bacterias have been introduced by Patlak\cite{Patlak} and Keller-Segel\cite{KS1, KS2}. The logistic source type of reaction term is also considered in many studies for the mathematical modeling of chemotaxis equations in a bounded domain with Neumann boundary conditions(see \cite{Tello-Wink, Wink1, Wink2} and references therein).\\
\indent In \cite{Kise-Ryz1, Kise-Ryz2}, Kiselev and Ryzhik initiated mathematical study on the phenomenon of broadcast spawning when males and females release sperm and egg gametes into the surrounding fluid. There is experimental evidence that eggs release a chemical that attracts sperm. The authors in \cite{Kise-Ryz1} and \cite{Kise-Ryz2} in particular have proposed the following chemotaxis model regarding the fertilization process (assuming that the densities of egg and sperm gametes are identical):
\begin{equation}\label{eq1-single}
\partial_t n +( u \cdot \nabla) n -\Delta n = \chi \nabla \cdot( n \nabla(\Delta)^{-1} n)-\epsilon n^q,\quad\mbox{ in }\,\, (x,t)\in \R^d\times (0,\, T),
\end{equation}
where $n$ is the density of egg (sperm) gametes, $u$ is the smooth divergence free sea fluid velocity, and $\chi$ denotes the positive chemotactic sensitivity constant. Also, $-\epsilon n^q$ denotes the reaction (fertilization) phenomenon. In \cite{Kise-Ryz1}, the global-in-time existence of the solution to \eqref{eq1-single} is presented under suitable conditions. Additionally, in $\R^2$, they showed that the total mass $m_0(t)=\int_{\R^2} n(x,t) dx$ approaches a positive constant whose lower bound is $C(\chi, n_0, u)$ as $t\rightarrow \infty$ when $q$ is an integer larger than 2. They also provided that $C(\chi, n_0, u)\rightarrow 0$ as $\chi \rightarrow \infty$. This implies that if the chemotactic sensitivity increases, then more eggs can be fertilized. The critical case of $d = q = 2$ was studied in \cite{Kise-Ryz2}; the total mass can go to zero with a reaction term only, but not faster than a logarithmic rate when the initial data is in the Schwartz class. If chemotaxis is present, the total mass is diminished in a power of $1/\chi$, which gives a faster decay rate than $1/\log t$ in a certain time scale. Recently, the existence and total mass behaviors have been studied in \cite{AKKL} when the chemical concentration is governed by the parabolic equation. Espejo and Suzuki \cite{Esp-Suz} considered parabolic-parabolic Keller Segel equations with reaction term coupled with Stokes equations in $\R^2$. They obtained the global-in-time existence of solution.\\
\indent Kiselev and Ryzhik\cite{Kise-Ryz1} also presented the following model of sperm and egg densities
\begin{equation}\label{system1-Kise-Ryz}\left\{
\begin{array}{ll}\partial_t s +( u \cdot \nabla) s = \kappa_1 \Delta s -(se)^{\frac{q}{2}},\quad s(x,0)=s_0(x),\\
\partial_t e +(u \cdot \nabla)e = \kappa_2 \Delta e -(se)^{\frac{q}{2}}, \quad e(x,0)=e_0(x).
\end{array}
\right.
\end{equation}
Here, $s$ and $e$ denote the densities of sperm and egg gametes. From \cite{Kise-Ryz1}, it is obtained that
if $ q > \max \{ \frac{d+2}{d},\, 2\}$, then there exists an absolute positive constant $\mu_1$ such that $ \| s(\cdot, t)\|_{L^1(\R^d)} + \| e (\cdot, t) \|_{L^1(\R^d)} \geq \mu_1 >0$ for all $t$.\\
 In this paper we consider more general mathematical models by allowing that egg density can differ from sperm density in
$\bbr^d$ ($d=2,3$) with $q=2$ considering the chemotaxis effect in the $s$ equation in \eqref{system1-Kise-Ryz}. Our first model reads as follows :
 \begin{equation}\label{Bioreaction} \left\{
\begin{array}{ll}
\partial_t e + (u \cdot \nabla)  e - \Delta e= -\epsilon (se),\\
\vspace{-3mm}\\
\partial_t s + (u \cdot \nabla) s-\Delta s =\chi \nabla \cdot (s \nabla \Delta^{-1} e)-\epsilon(se),\\
\end{array}
\right. \quad\mbox{ in }\,\, (x,t)\in \R^d\times (0,\, \infty),
\end{equation}
where
 $e \ge 0$ , $s\ge 0 $, and $u$  denote the
density of egg gametes, sperm gametes and the divergence free sea velocity of sea fluid,
respectively.  In the above, $\chi$ and $\epsilon$ are positive constants representing chemotactic sensitivity and fertilization rate, respectively.
We also assume that $u$ is in $C^{\infty}(\R^{d+1})$ and ${\rm{div}}\,\, u=0$.\\
We will obtain the apriori estimates
in section \ref{sect2}.
Initial data are given by $(e_0(x), s_0(x))$ with $e_0(x),\, s_0(x) \geq 0$.
\\

From now on, we denote $L^{q,p}_{t,x} = L^q(0, T; L^p(\bbr^d))$ and
$L^p_{t,x} = L^p(0, T; L^p(\bbr^d))$ with any given time $T$  in the context.
We mostly omit the spatial domain $\bbr^d$ in $L^p(\bbr^d)$ if there is no ambiguity. We also denote  a norm
\[
\| f \|_{M_n} = \int_{\R^d} (|f(x)|+|\nabla f(x)|)(1+|x|^n) dx,
\]
and Banach space $K_{m,n}$ defined by the norm $\| f\|_{K_{m,n}} = \| f\|_{M_n} + \| f\|_{H^m}$. We also denote a function space $X_{m,n}^{T}\equiv C([0, T]; K_{m,n})$ and  $X_{m,n}^{\infty}\equiv C([0, \infty); K_{m,n})$ .
Let $m_s(t)$ and $m_{e}(t)$ denote the total mass of sperm and egg gametes, respectively : $m_s(t)=\int_{\R^d} s(x,t)dx$ and $m_{e}(t)= \int_{\R^d} e(x,t) dx$.

Our first main result is the global-in-time existence of smooth solutions to \eqref{Bioreaction}. We also obtain the positive lower bound of the total mass for 3-dimensional case and the decay estimates of $\| e\|_{L^p}$ and $\| s\|_{L^p}$. 
 Compared to the case of $e$, the temporal decay of $s$ is a bit tricky, due to the presence of the chemotatic effect, i.e.  $\chi \nabla \cdot (s \nabla \Delta^{-1} e)$. It turns out that the reaction term $-\epsilon (se)$ in the egg equation, in particular in two dimenstions, plays a crucial role in controlling the chemotatic term. See the argument around \eqref{eq-s-p1}.

\begin{theorem}\label{thm2}
Let $d=2, 3$. We assume the initial data $(e_0(x),\, s_0(x))\in K_{m,n} \times K_{m,n}$ $( m \geq [\frac{d}{2}]+1$ and $n \geq 1)$ and a given velocity field $u(x, t) \in  C([0, \infty); H^m)$ satisfies $\rm{ div }\,\,u=0$.
\begin{description}
	\item[(i)] When $d=2,3$, there exists a unique solution $(e,\, s) \in X_{m,n}^{\infty}\times X_{m,n}^{\infty}$ to the system \eqref{Bioreaction}.
	\item[(ii)] When  $d=3$, we have $m_s(t) ,\, m_{e}(t)\geq C(\chi, \epsilon, s_0, \nabla e_0)>0$. This lower bound also satisfies $C(\chi, \epsilon, s_0, \nabla e_0) \rightarrow 0$ as $\chi \rightarrow \infty$.
	\item[(iii)] When $d=2,\, 3$, we have the following temporal decay estimates
	\begin{equation}\label{decay-e}
	\| e(t) \|_{L^p(\R^d)}\leq \frac{C}{t^{\frac{d}{2}\left( 1-\frac{1}{p} \right)}}, \quad p \in (1, \infty],
	\end{equation}
	and
	\begin{equation}\label{decay-s}
	\| s(t) \|_{L^p(\R^d)}\leq \frac{C}{t^{\frac{d}{2}\left( 1-\frac{1}{p} \right)}}, \quad p \in (1, \infty).
	\end{equation}
\end{description} 
\end{theorem}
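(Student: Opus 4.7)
The plan is to treat parts (i)--(iii) in sequence, exploiting that the $e$-equation is a pure advection--diffusion with favorable absorption, while the chemotactic coupling enters only the $s$-equation. For local well-posedness I would run a Picard iteration in $X_{m,n}^T$ based on the Duhamel formula for the heat semigroup: the reaction $-\epsilon(se)$ is lower order once $L^\infty$ bounds are under control, and the chemotactic term $\chi\nabla\cdot(s\nabla\Delta^{-1}e)$ is handled via the Calder\'on--Zygmund continuity of $\nabla\Delta^{-1}$ combined with the $H^m$ algebra property for $m\geq[d/2]+1$. The weighted-norm part of $K_{m,n}$ is propagated by testing against $(1+|x|^n)$ and exploiting $\mathrm{div}\,u=0$ together with the decay of $\nabla\Delta^{-1}e$ at infinity. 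To globalize, the maximum principle gives $0\leq e(t)\leq\|e_0\|_\infty$ and testing the $e$-equation by $e$ yields
\[
\tfrac12\tfrac{d}{dt}\|e\|_2^2+\|\nabla e\|_2^2=-\epsilon\!\int\!se^2\,dx\leq 0,
\]
so $\|e\|_2$ is nonincreasing and $\int_0^\infty\|\nabla e\|_2^2\,dt<\infty$. For $s$ the identity
\[
\chi\!\int\!s\,\nabla\!\cdot\!(s\nabla\Delta^{-1}e)\,dx=\tfrac{\chi}{2}\!\int\!s^2 e\,dx
\]
reduces the chemotactic term to a zeroth-order quantity absorbed by $\|e\|_\infty$ and the reaction $-\epsilon\!\int\!s^2e$ (the cancellation highlighted around \eqref{eq-s-p1}), closing the $L^2$-estimate for $s$ by Gr\"onwall; higher Sobolev norms follow by a standard bootstrap, delivering (i).

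For (ii), integrating both equations and using $\mathrm{div}\,u=0$ together with the divergence form of the chemotaxis term gives
\[
m_e'(t)=m_s'(t)=-\epsilon\!\int_{\R^3}\!se\,dx,
\]
so it suffices to show $\epsilon\int_0^\infty\!\!\int\!se\,dx\,dt<\min(m_e(0),m_s(0))$. In three dimensions I would estimate $\int\!se\leq\|s\|_{6/5}\|e\|_6\leq C\|s\|_1^{a}\|s\|_2^{1-a}\|\nabla e\|_2$ by H\"older and Sobolev, integrate in time using an $H^1$ dissipation estimate $\int_0^\infty\|\nabla e\|_2^2\,dt\lesssim\|\nabla e_0\|_2^2+\|e_0\|_2^2$, and close with a uniform $L^2$-bound on $s$ whose constant scales in $\chi$. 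Tracing how $\chi$ enters this chain yields the claimed $C(\chi,\epsilon,s_0,\nabla e_0)\to 0$ as $\chi\to\infty$, in line with the physical picture that stronger chemotaxis produces more fertilization.

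For (iii), the inequality $\tfrac{d}{dt}\|e\|_2^2+2\|\nabla e\|_2^2\leq 0$ together with $\|e(t)\|_1\leq\|e_0\|_1$ gives $\|e(t)\|_2\lesssim t^{-d/4}$ by Nash's inequality, and a standard $L^1$--$L^\infty$ smoothing extends this to all $p\in(1,\infty]$ at the stated rate. For $s$ the same scheme applies once the chemotactic forcing is tamed: using the decay of $\|e\|_\infty$ already obtained, the additional term in the $L^p$-inequality for $s$ is integrable in time, and a Duhamel/Gr\"onwall argument produces the rate $t^{-(d/2)(1-1/p)}$ for $p\in(1,\infty)$, the exclusion $p=\infty$ reflecting the borderline scaling of the chemotactic nonlinearity in three dimensions. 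The main obstacle is (ii): producing a bound on $\int_0^\infty\!\int\!se\,dx\,dt$ that is both finite \emph{and} whose constant tends to zero as $\chi\to\infty$ forces the combination of the $H^1$-dissipation for $e$ with a $\chi$-tracking $L^2$-estimate for $s$, and the coarser bounds that would suffice in two dimensions will not do here.
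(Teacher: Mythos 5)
Your part (i) is essentially the paper's argument (the identity $\chi\int s\,\nabla\cdot(s\nabla\Delta^{-1}e)\,dx=\tfrac{\chi}{2}\int s^2e\,dx$ is exactly \eqref{w34} with $p=2$, and the bootstrap and weighted estimates match), but parts (ii) and the two-dimensional half of (iii) have genuine gaps. For (ii), your reduction to showing $\epsilon\int_0^\infty\!\int se\,dx\,dt<\min(m_e(0),m_s(0))$ is not achievable by the chain you propose. The mass identities already give $\epsilon\int_0^t\!\int se=m_e(0)-m_e(t)\le m_e(0)$ for free; the issue is strictness, and a quantitative upper bound assembled from Sobolev constants, $\|s_0\|_{L^1}$ and $\|\nabla e_0\|_{L^2}$ has no reason to fall below the initial masses for general data. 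Moreover your time integration does not close: $\int se\lesssim\|s\|_{L^1}^{2/3}\|s\|_{L^2}^{1/3}\|\nabla e\|_{L^2}$ needs $\|\nabla e\|_{L^2}\in L^1_t(0,\infty)$, whereas the dissipation estimate only gives $\|\nabla e\|_{L^2}\in L^2_t(0,\infty)$. The paper proceeds differently: it first proves $\|e(t)\|_{L^\infty}\lesssim t^{-3/2}$ by Nash--Moser iteration, deduces $m_s(t)\ge m_s(t_0)$ from $m_s'\ge-\epsilon\|e\|_{L^\infty}m_s$ with $t^{-3/2}$ integrable at infinity, then proves $\|s(t)\|_{L^2}\lesssim t^{-3/4}$ by absorbing $\chi\int es^2\le C\chi\|e\|_{L^{3/2}}\|\nabla s\|_{L^2}^2$ once $t_0$ is so large that $C\chi t_0^{-1/2}<\tfrac12$, and finally gets $m_e'\ge-\epsilon\|e\|_{L^2}\|s\|_{L^2}\gtrsim-t^{-3/2}$. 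The lower bound is $m(t_0)>0$, and the $\chi$-dependence (hence $C\to0$ as $\chi\to\infty$) enters through the required size $t_0\sim\chi^2$; your sketch does not actually produce this mechanism.

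For (iii), your claim that, after using the decay of $\|e\|_{L^\infty}$, ``the additional term in the $L^p$-inequality for $s$ is integrable in time'' fails precisely in $d=2$: the chemotactic contribution is $\tfrac{p-1}{p}\chi\int es^p\le\chi\|e\|_{L^\infty}\|s\|_{L^p}^p$ with $\|e(t)\|_{L^\infty}\lesssim t^{-1}$, which is borderline non-integrable, so a Gr\"onwall/change-of-variables argument neither stays bounded favorably nor transfers the dissipation-driven decay rate. This is exactly the difficulty the paper flags around \eqref{eq-s-p1}: in 2D one must add a large multiple $M$ of the $L^p$-identity for $e$ to that for $s$, estimate $\int s^pe$ by H\"older against $\bke{\int s^{p+1}}^{(p-1)/p}\bke{\int e^ps}^{1/p}$, absorb $\int e^ps$ into the reaction term of the $e$-identity and $\int s^{p+1}\le C\|s\|_{L^1}\|\nabla s^{p/2}\|_{L^2}^2$ into the dissipation by choosing $M\sim p^2\|s_0\|_{L^1}$. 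Without this device your scheme only delivers the $s$-decay in $d=3$, where $\|e\|_{L^{3/2}}\to0$ allows the chemotactic term to be absorbed directly into $\|\nabla s^{p/2}\|_{L^2}^2$, as the paper does.
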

\begin{remark}
In Theorem \ref{thm2} (ii), the fact that lower bound $C(\chi, \epsilon, s_0, \nabla e_0) \rightarrow 0$ as $\chi \rightarrow \infty$ implies that if the chemotactic sensitivity is dominant, then total mass of egg or sperm density may vanish, hence perfect fertilization may occur.
\end{remark}
\indent Next, we consider the following  egg-sperm chemotaxis model coupled with the incompressible fluid equations(Navier-Stokes or Stokes equations):
 \begin{equation}\label{se-NS-intro} \left\{
\begin{array}{ll}
\partial_t e + (u \cdot \nabla)  e - \Delta e= -\epsilon (se),\\
\vspace{-3mm}\\
\partial_t s + (u \cdot \nabla) s-\Delta s =-\chi \nabla \cdot (s \nabla c)-\epsilon(se),\\
\partial_t c+(u \cdot \nabla) c -\Delta c = e,\\
\partial_t u +\kappa ( u\cdot \nabla) u -\Delta u +\nabla p =-(s+e) \nabla \phi,\quad \rm{ div }\,\,u=0,\\
\end{array}
\right. \quad\mbox{ in }\,\, (x,t)\in \R^d\times (0,\, \infty),
\end{equation}
where
 $e,\, s, \, c\ge 0 $, and $u$  denote the
density of egg gametes, sperm gametes, chemicals and the divergence free sea velocity of sea fluid governed by the fluid equations,
respectively. $\phi$ denotes potential function, which is given by gravitational force, centrifugal force, etc. We will set $\kappa=1$(Navier-Stokes equations) when $d=2$ and $\kappa=0$ (Stokes system) when $d=3$.
Chemotaxis equation coupled with the fluid equations have been considered in many studies, especially for describing the dynamics of Bacillus Subtilis in the water droplet. For recent mathematical developments in the model, please refer to \cite{CKL1, CKL3, CKLL, Esp-Suz} and references therein.\\
\indent For the system \eqref{se-NS-intro} our main aim is to establish global well-posedness of solutions.
To be more precise,  in two dimensions, we prove that unique regular solutions exist globally in time for large initial data, provided that the data are regular enough.
On the other hand, for three dimensional case, global well-posedness can be obtained under smallness condition of $L^1$-norm of intial data of $s$, i.e. 
$\norm{s_0}_{L^1}$ (more specifically, it suffices to assume that $\chi^2 \| s_0 \|_{L^1_{x}}^{2} \| \nabla \phi \|_{L^{\infty}_{x,t}}^2$ is small).
It is worth mentioning that  $L^1$-norm of $s_0$ is a {\it super-critical} qunatity in 3D under the scaling invariance \eqref{scaling-escup}
($L^{3/2}$-norm of $s_0$ is indeed scaling invariant in 3D). In this sense, our result is beyond scaling invariance but we do not know if the smallness assumption can be removed or not. Now we are ready to state our second result, where temporal decays of solutions are also shown as well.

\begin{theorem}\label{thm-exist-es-NS}
Let $d=2,\, 3$. We assume the initial data $(e_0(x),\, s_0(x),\, c_0 (x),\, u_0(x)) \in K_{m, n}\times K_{m,n} \times K_{m,n}\times H^m$ $( m \geq [\frac{d}{2}]+1$ and $n \geq 1)$ with ${\rm{ div }} \,\,u_0=0$. We also assume that $s_0,\, e_0 \in L^1(\R^d)$.
\begin{description} 
\item[(i)] When $d=2$ and $\kappa=1$, there exist unique solutions $(e,\, s,\,c,\, u) \in X_{m,n}^{\infty}\times X_{m,n}^{\infty}\times X_{m,n}^{\infty}\times C([0,\, \infty); H^m)$ to the equations \eqref{se-NS-intro}.
\item[(ii)] When $d=3$ and $\kappa=0$, assuming $\chi^2 \| s_0 \|_{L^1_{x}}^{2} \| \nabla \phi \|_{L^{\infty}_{x,t}}^2$ to be sufficiently small, there exist unique solutions $(e,\, s,\,c,\, u) \in X_{m,n}^{\infty}\times X_{m,n}^{\infty}\times X_{m,n}^{\infty}\times C([0,\, \infty); H^m)$  to the equations \eqref{se-NS-intro}.
Moreover, we have $m_e(t),\, m_s(t) \geq C(\chi, \epsilon, s_0, \nabla e_0)>0$. This lower bound  also satisfies $C(\chi, \epsilon, s_0, \nabla e_0) \rightarrow 0$ as $\chi \rightarrow \infty$.
\item[(iii)] We have the following decay estimates
\begin{equation}\label{decay-e-22}
\| e (t) \|_{L^p(\R^d)} \leq \frac{C}{t^{\left( 1-\frac{1}{p}  \right)\frac{d}{2}}},\quad\mbox{ when } 1< p \le \infty,\quad \mbox{ if } d=2,\, 3, 
\end{equation}
\begin{equation}\label{Jan-13-40}
\norm{c(t)}_{L^q}\leq
Ct^{-\frac{3}{2}(\frac{1}{3}-\frac{1}{q})},\qquad 3<q<\infty.\qquad \mbox{ if } d=3,
\end{equation}
Furthermore, when $d=2$ and $\omega$ is the vorticity of $u$, if we assume that $\| s_0 \|_{L^1(\R^2)}+\| e_0 \|_{L^1(\R^2)}+\| \nabla c_0 \|_{L^2(\R^2)} +\| \omega_0 \|_{L^1(\R^2)} \leq \epsilon_1$, then we have
\bq \label{eq-May-25-2}
\| s(t) \|_{L^p (\R^2)} \leq \frac{C\epsilon_1}{t^{\left( 1-\frac{1}{p}\right)}},\quad \| \nabla c(t)\|_{L^{\infty}} \le \frac{C\epsilon_1}{t^{\frac12}},\quad \| \omega (t)\|_{L^{\gamma}(\R^2)} \leq \frac{C\epsilon_1}{t^{1-\frac{1}{\gamma}}},
\eq
where $1<p\leq \infty$ and $1<\gamma<2$.
\end{description}
\end{theorem}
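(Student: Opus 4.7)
The approach is to combine local well-posedness in the weighted space $X_{m,n}^T$ with global a priori estimates that close differently in two and three dimensions. Local existence can be obtained by a standard Picard iteration in $K_{m,n}$: the heat semigroup and smooth divergence-free transport preserve polynomial moments (so the $M_n$ part propagates), and the Sobolev embedding $H^m\hookrightarrow C^1$ for $m\geq[d/2]+1$ controls the nonlinear products $se$, $\nabla\cdot(s\nabla c)$, $(s+e)\nabla\phi$, and $(u\cdot\nabla)u$. Throughout I would rely on three elementary facts: nonnegativity $e,s,c\geq 0$ by a maximum principle, monotonicity $\|e(t)\|_{L^1},\|s(t)\|_{L^1}$ nonincreasing since $se\geq 0$, and $\|e(t)\|_{L^\infty}\leq\|e_0\|_{L^\infty}$.

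For part (i) in two dimensions with Navier--Stokes ($\kappa=1$), the crucial step is an entropy estimate for the chemotactic equation. Multiplying the $s$-equation by $\log s$ and using $-\Delta c = e - \partial_t c - u\cdot\nabla c$ to rewrite the cross term $-\chi\int s\Delta c$ yields, after integration in time and using $\int se\leq \|e_0\|_{L^\infty}\|s_0\|_{L^1}$, both $\sqrt{s}\in L^2_tH^1_x$ and $s\log s\in L^\infty_tL^1_x$. Gagliardo--Nirenberg in 2D then upgrades this to $s\in L^2_tL^4_x$ and, after a further parabolic estimate, to $s\in L^\infty_tL^2_x$. Standard 2D Navier--Stokes energy and vorticity estimates, with forcing $-(s+e)\nabla\phi$ bounded through $\|s+e\|_{L^2}$, provide global control of $u$ in $H^m$. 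The cascade is then differentiated to close the $H^m\cap M_n$ bound via Gronwall.

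For part (ii) in three dimensions with Stokes ($\kappa=0$), the linearity in $u$ allows one to represent the velocity by Duhamel through $e^{t\Delta}\mathbb{P}$ applied to $-(s+e)\nabla\phi$. The smallness of $\chi^2\|s_0\|_{L^1}^2\|\nabla\phi\|_{L^\infty}^2$ is invoked through the Duhamel representation of $s$:
\begin{equation}
s(t)=e^{t\Delta}s_0+\int_0^t e^{(t-\tau)\Delta}\bigl(-u\cdot\nabla s-\chi\nabla\cdot(s\nabla c)-\epsilon se\bigr)\,d\tau,
\end{equation}
where one closes a bootstrap in a time-weighted norm $\sup_t t^\alpha\|s(t)\|_{L^p}$ above the critical exponent $3/2$. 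The chemotactic contribution is the obstacle; controlling $\nabla c$ through the $c$-equation driven by $e$, one uses $\|s(t)\|_{L^1}\leq\|s_0\|_{L^1}$ together with the smallness hypothesis to dominate the quadratic feedback between $s$, $u$, and $\nabla c$. The same framework produces $\int_0^\infty\int se\,dxdt<\infty$, which delivers the positive lower bound $m_e,m_s\geq C(\chi,\epsilon,s_0,\nabla e_0)>0$ along the scheme of Kiselev and Ryzhik; the monotonicity in $\chi$ is read off from the explicit dependence of the Duhamel estimate on $\chi$.

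The decay estimates in part (iii) are obtained by semigroup arguments using the global bounds already established: for $e$, comparison with the pure heat equation via the sink $-\epsilon se\leq 0$ yields $\|e(t)\|_{L^p}\lesssim t^{-d(1-1/p)/2}$; $c$ inherits a rate from $e$ through the heat representation, giving \eqref{Jan-13-40}; and $s$ is then handled by the Duhamel bootstrap used for existence, now employing the algebraic decay of $\nabla c$. For the refined two-dimensional estimate \eqref{eq-May-25-2}, the smallness of $\|s_0\|_{L^1}+\|e_0\|_{L^1}+\|\nabla c_0\|_{L^2}+\|\omega_0\|_{L^1}$ permits a small-data perturbation around the heat flow for the vorticity $\omega$, propagating the optimal rate across all unknowns. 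The principal obstacle throughout is the three-dimensional chemotaxis term $\chi\nabla\cdot(s\nabla c)$: since $\|s\|_{L^1}$ is super-critical under the scaling \eqref{scaling-escup}, the smallness hypothesis must be used very precisely to dominate this term while preserving the only conserved quantity at our disposal, namely the $L^1$ mass of $s$.
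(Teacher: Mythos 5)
Your overall architecture (local existence in $K_{m,n}$ plus a priori estimates, an entropy functional for the hard case, and time-weighted Duhamel norms for decay) is in the right family, but you have swapped the roles of the two key tools in a way that breaks the argument where it matters most. The paper does \emph{not} use an entropy estimate in $2D$: there it controls $u$ by splitting $u=v+w$ (Stokes part with $L^1$ forcing $-(s+e)\nabla\phi$ plus a perturbed Navier--Stokes part), obtains $u\in L^4_{t,x}$ and hence $\nabla c\in L^4_{t,x}$ by maximal regularity, and then closes the $L^p$ estimate for $s$ directly. Your $2D$ entropy sketch has a concrete hole: after testing with $\log s$ the cross term is $\chi\int\nabla s\cdot\nabla c$, and your substitution $-\Delta c=e-\partial_t c-u\cdot\nabla c$ introduces $-\chi\int s\,\partial_t c$, which has no sign and is not controlled by $\int se$ or the mass bounds; you would still need an a priori bound on $\nabla c$ (or a genuine Lyapunov structure coupling $s$ and $c$), which is exactly what the paper's Lemma \ref{uL4} and \eqref{2dc} supply.

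The more serious gap is in (ii). A Kato-type bootstrap in $\sup_t t^{\alpha}\|s(t)\|_{L^p}$ cannot close under the stated hypothesis, because $\|s_0\|_{L^1}$ is \emph{supercritical} in $3D$ (the critical space is $L^{3/2}$, as the paper notes around \eqref{scaling-escup}): with the $L^1$-based weight $\alpha=\tfrac32(1-\tfrac1p)$ the quadratic term $\nabla\cdot(s\nabla c)$ produces a time integral of the form $\int_0^t(t-\tau)^{-a}\tau^{-b}\,d\tau$ with $b>1$, which diverges at $\tau=0$; with the critical weight the linear part $e^{t\Delta}s_0$ is not small just because $\chi^2\|s_0\|_{L^1}^2\|\nabla\phi\|_{L^\infty}^2$ is. The paper's actual mechanism is an entropy inequality for $s$ combined with Stokes maximal regularity: $\|\nabla c\|_{L^{10/3,15/2}_{t,x}}$ is bounded by $\|s\|_{L^{10/3,5/4}_{t,x}}\|\nabla\phi\|_{L^\infty}$, the interpolation $\|s\|_{L^{5/4}}\le\|s\|_{L^1}^{7/10}\|s\|_{L^3}^{3/10}$ converts this into a multiple of $\chi^2\|s_0\|_{L^1}^2\|\nabla\phi\|_{L^\infty}^2\,\|\nabla\sqrt{s}\|_{L^2_{t,x}}^2$, and the smallness hypothesis lets this be absorbed into the entropy dissipation, yielding $s\in L^1_tL^3_x$ and then a bootstrap. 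Finally, your claim that $\int_0^\infty\!\int se\,dx\,dt<\infty$ by itself ``delivers'' the positive lower bound on $m_e,m_s$ is not sufficient: that only shows the masses converge, possibly to zero. The paper needs the quantitative rates $\|e(t)\|_{L^{3/2}}\lesssim t^{-1/2}$ and $\|s(t)\|_{L^2}\lesssim t^{-3/4}$ to integrate the differential inequalities $\tfrac{d}{dt}m_s\ge -Ct^{-3/2}m_s$ and $\tfrac{d}{dt}m_e\ge -C\epsilon t^{-3/2}$ from a large time $t_0$ and conclude positivity.
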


\begin{remark}
Formally integrating both sides of \eqref{Bioreaction} $($or \eqref{se-NS-intro}$)$ over ${\mathbb{R}}^d$ and subtracting the first equation from the second equation, we deduce that
\begin{equation}\label{qwe02}
\| s \|_{L^1({\mathbb{R}}^d)} (t) - \| e \|_{L^1({\mathbb{R}}^d)}(t) = \| s_0 \|_{L^1({\mathbb{R}}^d)} - \| e_0 \|_{L^1({\mathbb{R}}^d)}, \mbox{ for all } t>0.
\end{equation} 
Hence the difference of the total mass of sperm and egg cells is conserved.\\
On the other hand, in the $2D$ case, Kiselev and Ryzhik \cite[Theorem 1.1]{Kise-Ryz2}  showed that if $\rho_0 \in {\mathcal{S}}$ (Schwartz class) and $\rho$ satisfy
\begin{equation}\label{qwe-03}
\partial_t \rho +(u\cdot \nabla) \rho -\Delta \rho = -\epsilon \rho^2,
\end{equation}
then, for any $ \sigma >0$ and $t \geq 1$, there exists a constant $C(\sigma, \rho_0) >0$ such that
\[
\| \rho (\cdot, t) \|_{L^1({\mathbb{R}}^2)} \leq \frac{C( \sigma, \rho_0)}{(1+\epsilon \log t)^{1-\sigma}}.
\]
Note that \eqref{qwe-03} corresponds to \eqref{eq1-single} when the chemotaxis is absent and $q=2$.\\
\indent
If $s(x,t) > e(x,t)$ holds true for all $(x, t) \in {\mathbb{R}}^2 \times (0, \infty)$, then $\eqref{Bioreaction}_1$ and $\eqref{se-NS-intro}_1$ are reduced to
\[
\partial_t e +(u \cdot \nabla) e -\Delta e = -\epsilon (se ) \leq - \epsilon e^2.
\]
In this case, applying Kiselev and Ryzhik's result for the solution to  the above (assuming $u$ is sufficiently regular), we obtain
\[
\| e (\cdot , t) \|_{L^1({\mathbb{R}}^2)} \leq \frac{C}{(1+ \epsilon \log t)^{1-\sigma}} \rightarrow 0 \mbox{ as } t \rightarrow \infty.
\]
Taking into account \eqref{qwe02}, we infer that, in $2D$, an egg cell can be perfectly fertilized if the initial sperm cell density is much larger than that of the egg cell.
\end{remark}
\begin{remark}
After completing this work, we are informed that Espejo and Winkler\cite{Esp-Wink} obtained classical solvability and stabilization in a chemotaxis-Navier-Stokes system modeling coral fertilization in a smooth bounded two-dimensional domain. Our result has an essential difference from their work in the asymptotic behaviour in the whole domain.
\end{remark}
\indent The rest of this paper is as follows :
In Section \ref{sect2}, we provide the proofs for the global-in-time existence of the smooth solution to \eqref{Bioreaction} and also provide the proofs of the positive lower bounds of the total mass and decay estimates. In Section \ref{sect3}, we consider the global well-posedness of the system \eqref{se-NS-intro} and provide the proof of Theorem \ref{thm-exist-es-NS} and especially consider the decay properties of the solutions to \eqref{se-NS-intro} with the small initial data.
%

\section{Global Well-posedness and Asymptotic Behavior of Total Mass}\label{sect2}

In this section, we provide some apriori estimates of solutions to \eqref{Bioreaction}. Also we provide the proof of global well-posedness of \eqref{Bioreaction} (Theorem \ref{thm2} (i)) and lower bound of the total mass(Theorem \ref{thm2} (ii)).
Using the standard method(contraction mapping principle), the local-in-time existence of regular solution can be shown, which 
reads
as follows:
\begin{proposition}\label{prp1}
Let $d=2,3$ and $n$ be a positive integer and initial data $(e_0, s_0)$  as in Theorem \ref{thm2} belong to  $K_{m,n}\times K_{m,n}$ $(m > \left[ \frac{d}{2} \right]+1)$. Suppose that $u\in C^{\infty}\cap L^{\infty}(\R^{d} \times [0,\infty))$ is divergence free and any of its spatial derivatives is uniformly bounded for all $(x,t)\in \R^{d}\times (0,\infty)$. Then there exists a maximal time of
existence $T_{*}$, such that for $t<T_{*}$,  a pair of unique regular solution
$(e,s)$ of \eqref{Bioreaction} exists and satisfies
\[
(e,s) \in X^{t}_{m,n} \times X^{t}_{m,n}.
\]
\end{proposition}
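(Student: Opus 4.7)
The plan is to obtain a local solution by a standard Picard iteration in $X_{m,n}^T\times X_{m,n}^T$ for sufficiently small $T>0$, and then to continue it up to the maximal time $T_*$. Writing \eqref{Bioreaction} in Duhamel form via the heat semigroup $\{e^{t\Delta}\}_{t\ge 0}$, I would define
\begin{align*}
\Phi_1(\bar e,\bar s)(t) &= e^{t\Delta}e_0-\int_0^t e^{(t-\tau)\Delta}\bigl((u\cdot\nabla)\bar e+\epsilon\,\bar s\bar e\bigr)\,d\tau,\\
\Phi_2(\bar e,\bar s)(t) &= e^{t\Delta}s_0-\int_0^t e^{(t-\tau)\Delta}\bigl((u\cdot\nabla)\bar s-\chi\nabla\cdot(\bar s\nabla\Delta^{-1}\bar e)+\epsilon\,\bar s\bar e\bigr)\,d\tau,
\end{align*}
and show that $\Phi=(\Phi_1,\Phi_2)$ is a contraction on the closed ball
\[
B_R^T=\bigl\{(\bar e,\bar s)\in X_{m,n}^T\times X_{m,n}^T:\|\bar e\|_{X_{m,n}^T}+\|\bar s\|_{X_{m,n}^T}\le R\bigr\}
\]
with $R=2(\|e_0\|_{K_{m,n}}+\|s_0\|_{K_{m,n}})$, once $T$ is chosen small enough in terms of $R$, $\chi$, $\epsilon$, and the uniform bounds on $u$ and its derivatives.

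The required estimates split along the decomposition $K_{m,n}=H^m\cap M_n$. For the Sobolev part I would use the contraction property $\|e^{t\Delta}f\|_{H^m}\le\|f\|_{H^m}$ together with the Moser product rule $\|fg\|_{H^m}\le C(\|f\|_{L^\infty}\|g\|_{H^m}+\|f\|_{H^m}\|g\|_{L^\infty})$, which is effective because $m>[d/2]+1$ gives $H^m\hookrightarrow W^{1,\infty}$; the parabolic smoothing $\|\nabla e^{(t-\tau)\Delta}F\|_{H^{m-1}}\lesssim(t-\tau)^{-1/2}\|F\|_{H^{m-1}}$ then produces an integrable singularity in $\tau$ for the divergence-form chemotactic term. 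For the moment part the pointwise inequalities $|x|^n\lesssim|x-y|^n+|y|^n$ and $(1+|x|^n)\le C(1+|x-y|^n)(1+|y|^n)$, applied to the Gaussian kernel, give $\|e^{t\Delta}f\|_{M_n}\lesssim(1+t^{n/2})\|f\|_{M_n}$, and weighted Young-type estimates control convolutions with localised kernels. The transport term is benign because $u$ and its spatial derivatives are uniformly bounded by hypothesis, and the quadratic reaction $\epsilon\,\bar s\bar e$ is handled by the same product estimates in both $H^m$ and $M_n$.

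The main obstacle is controlling the nonlocal chemotactic drift $\bar s\,\nabla\Delta^{-1}\bar e$ simultaneously in the Sobolev and weighted-$L^1$ topologies. Writing $\nabla\Delta^{-1}$ as convolution with the Riesz-type kernel $K(x)=c_d\,x/|x|^d$, a splitting $K=K\mathbf{1}_{\{|z|\le 1\}}+K\mathbf{1}_{\{|z|>1\}}$ combined with $\bar e\in L^1\cap L^\infty$ gives the pointwise bound $\|\nabla\Delta^{-1}\bar e\|_{L^\infty}\lesssim\|\bar e\|_{L^1}^{1/d}\|\bar e\|_{L^\infty}^{(d-1)/d}$ in $d=2,3$; higher Sobolev norms are then handled by the Calder\'on--Zygmund estimate $\|\nabla^{k+1}\Delta^{-1}\bar e\|_{L^2}\lesssim\|\nabla^k\bar e\|_{L^2}$, so the Moser inequality yields $\|\bar s\,\nabla\Delta^{-1}\bar e\|_{H^m}\le C\|\bar s\|_{K_{m,n}}\|\bar e\|_{K_{m,n}}$. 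For the weighted-$L^1$ bound, the far-field $|K(x-y)|\lesssim 1$ on $|x-y|>1$ together with $(1+|x|^n)\le C(1+|x-y|^n)(1+|y|^n)$ transfers the weight onto $\bar e$, while the localised piece is absorbed by a weighted Young inequality. The smoothing factor $(t-\tau)^{-1/2}$ coming from the divergence against the heat kernel is integrable, so after $\tau$-integration the chemotactic contribution picks up a power $T^{1/2}$ and is absorbed by taking $T$ small. A standard subtraction argument then yields the same estimates for $\Phi(\bar e_1,\bar s_1)-\Phi(\bar e_2,\bar s_2)$ with a contraction constant of order $T^\delta$ for some $\delta>0$, producing a unique fixed point on $[0,T]$; iterating up to failure of the a priori bounds gives the maximal time of existence $T_*$.
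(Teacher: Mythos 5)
Your proposal is essentially the approach the paper itself invokes: the paper omits the proof of this proposition, stating that it follows from the standard contraction mapping principle as in Kiselev--Ryzhik [Theorem 5.4], which is exactly your Duhamel/Picard iteration with heat-semigroup smoothing, Moser product estimates in $H^m$, and the $L^1\cap L^\infty$ bound on $\nabla\Delta^{-1}\bar e$. The only organizational difference is that the paper prefers to run the contraction in $H^m$ alone and recover the moment norm $M_n$ afterwards by separate weighted energy estimates (so that the contraction constant depends only on the $H^m$ norm of the data), whereas you fold $M_n$ directly into the fixed-point space $X_{m,n}^T$; both routes yield the stated local existence and maximal time.
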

The proof of the  proposition is quite standard, hence we omit it.  It can be found in \cite[Theorem 5.4]{Kise-Ryz1}.
\\

 \indent
 In this section and throughout the paper we use the  maximal $L^p-L^q$ estimates or maximal regularity estimates for  the heat equations: let 
 $ 1< p, q < \infty$.    If $v$ is the solution of the heat equation
 \[ \pa_t v- \Del v = f(x, t), \quad v(\cdot, 0) = v_0 \]
 for the given function  $f(x,t) \in L_t^qL_x^p( 0, \infty;  \bbr^d) $ and $v_0 \in W^{2,p}(\bbr^d)$,  there exist a constant  $C>0$ (see \cite{Gi-So}) such that 
 \begin{align}\label{max}
\int_0^T  \| \pa_t v \|_{L_x^p}^q dt + \int_0^T \| \Del v\|_{L_x^p}^q dt \le C\left(\| v_0\|_{W^{2,p}}^{q} + \int_0^T \| f\|_{L^p}^q dt \right).
 \end{align}
 \indent
  We often denote $(0, T)\times \bbr^d$ by $Q_T$ and $ \| v\|_{L^q_tL^p_x(0, T; \bbr^d)}$ by $L^{q, p}_{t,x} (Q_T)$.  When $p=q$, we simply write $L^p(Q_T)$. Also we denote $\sum_{|\alpha| \leq m} \| D^{\alpha} v \|_{L^q_tL^p_x(0, T; \bbr^d)}$ by $L^{q}_{t}W^{m,p}_x (Q_T)$ (or $L^q_tH^m_x$ if $p=2$).\\

  In what follows, we  derive some a priori estimates of $(e, s)$ to prove Theorem \ref{thm2}. \\
$\bullet$ ($L^1$ estimates) First, we have the following decreasing properties for the total mass
\[
\frac{d}{dt} \int_{\R^d} e(x,t) dx + \epsilon \int_{\R^d} se\,\,dx=0,
\]
and
\[
\frac{d}{dt}\int_{\R^d} s(x,t) dx + \epsilon \int_{\R^d} se\,\,dx=0.
\]
Integrating with respect to time, we have
\[
\sup_{0 \leq t \leq T} \int_{\R^d} e(x,t) dx + \epsilon \int_0^{T}\int_{\R^d} (se)dxdt\leq \| e_0 \|_{L^1},
\]
and
\[
\sup_{0 \leq t \leq T}\int_{\R^d} s(x,t) dx + \epsilon \int_0^{T}\int_{\R^d} (se) dxdt\leq \| s_0 \|_{L^1}.
\]
$\bullet$ ($L^p$-estimates) By multiplying $e^{p-1}$ both sides of $e$ equation, and integrating over ${\mathbb{R}}^d$, we obtain that\\
\[
\sup_{ 0\leq t \leq T} \int_{\R^d} e^p(x,t) dx +\frac{4(p-1)}{p} \int_0^T \| \nabla e^{p/2}\|_{L^2}^2 dt +\epsilon p \int_0^{T} \int_{\R^d} s e^{p} dxdt \leq \| e_0 \|_{L^p}^p.
\]
Moreover, as $p \rightarrow \infty$, we have $ \| e (t) \|_{L^{\infty}} \leq \| e_0 \|_{L^{\infty}}$.

For the sperm density, we have the following
\begin{equation}\label{w34}
\frac{1}{p}\frac{d}{dt} \| s(t) \|_{L^p}^p + \frac{4(p-1)}{p^2} \| \nabla s^{\frac{p}{2}} \|_{L^2}^2 + \epsilon \int_{\R^d} e s^{p} dx
 = \frac{p-1}{p}\chi \int_{\R^d} e s^{p} dx .
\end{equation}
We note that if $\epsilon \geq \chi$, then the righthand side can be absorbed to the left. Hence it is direct that
\[
s \in L^{\infty}(0, \infty; L^p)\mbox{ and } \nabla s^{p/2} \in L^2(0, \infty; L^2) \mbox{ for } p\in (1, \infty).
\]
It also holds that $s \in L^{\infty}(0, \infty; L^{\infty})$.\\
If $0<\epsilon <\chi$, then we have
\[
\frac{1}{p}\frac{d}{dt} \| s(t) \|_{L^p}^p + \frac{4(p-1)}{p^2} \| \nabla s^{\frac{p}{2}} \|_{L^2}^2 + \epsilon \int_{\R^d} e s^{p} dx
\]
\[
 = \frac{p-1}{p}\chi \int_{\R^d} e s^{p} dx \leq \frac{p-1}{p}\chi \| e\|_{L^{\infty}} \| s\|_{L^p}^p.
\]

Hence we deduce that \[
s \in L^{\infty}(0, T; L^p)\mbox{ and } \nabla s^{p/2} \in L^2(0, T; L^2) \mbox{ for  any } p\in (1, \infty) \mbox{ and } T>0.
\]
$\bullet$ ($H^1$ estimates) Next, we consider $H^1$ estimates of $s$ :\\
 By use of the  maximal regularity of heat equation, we easily deduce that
\[
\| \partial_t e \|_{L^2(Q_{T})} + \| \Delta e \|_{L^2(Q_{T})}\leq C\| e_0 \|_{H^1} +C (\| \nabla e \|_{L^2(Q_{T})} + \| se \|_{L^2(Q_{T})})< \infty.
\]
Therefore, together with ($L^p$-estimates) we obtain
\[
\partial_t e \in L^{2}_{t,x},\mbox{ and }e \in L^2(0, T; H^2).
\]
Taking $L^2$ inner product of $-\Delta s$ with $s$ equation, we find that
\[
\frac12 \frac{d}{dt} \| \nabla s \|_{L^2}^2 +\| \Delta s \|_{L^2}^2 +\epsilon \int_{\R^d} |\nabla s|^2 e dx
\]
\[
\leq \|  \nabla u \|_{L^{\infty}} \| \nabla s\|_{L^2}^2 +\epsilon  \| \nabla s\|_{L^2} \| \nabla e\|_{L^2}\| s\|_{L^{\infty}}
\]
\[
+  \chi \| s\|_{L^{\infty}} \| \nabla e\|_{L^{2}} \| \nabla s\|_{L^2} +C\chi \| \nabla s\|_{L^3}^2 \| e\|_{L^3}
\]
\[
\leq C( \| \nabla u \|_{L^{\infty}} +\| s\|_{L^{\infty}}^2+ 1)\| \nabla s\|_{L^2}^2+\delta \| \Delta s\|_{L^2}^2+ C\| \nabla e\|_{L^2}^2.
\]
In the above, $\delta$ can be chosen as a sufficiently small positive constant which can be absorbed in the lefthand side.\\
Using the Gronwall type inequality, we have for any $T>0$.
\[
\nabla s \in L^{2, \infty}_{x,t}(Q_{T})\cap H^{1}_{x}L^{2}_{t}(Q_{T}).
\]
$\bullet$ ($H^2$ estimates) For the higher norm estimates, we proceed as follows.\\
We estimate similarly with the above\\
\[
\| \partial_t \nabla e \|_{L^2(Q_{T})} + \| \nabla \Delta e \|_{L^2(Q_{T})}\leq C \| e_0 \|_{H^2} +C \| \nabla( u \cdot \nabla e)\|_{L^2(Q_{T})} +C \| \nabla (se)\|_{L^2(Q_{T})}
\]
\[
\leq C \| e_0 \|_{H^2}+C \| \nabla u \|_{L^{\infty}(Q_{T})} \| \nabla e \|_{L^2(Q_{T})} + \| u \|_{L^{\infty}(Q_{T})} \| \nabla^2 e \|_{L^2(Q_{T})}\]
\[ +C ( \| \nabla s \|_{L^2(Q_{T})} \| e\|_{L^{\infty}(Q_{T})}+\| \nabla e \|_{L^2(Q_{T})} \| s\|_{L^{\infty}(Q_{T})}) < \infty.
\]
For the estimates of solution $s$, we have
\[
\frac12 \frac{d}{dt} \| \Delta s \|_{L^2}^2 +\| \nabla \Delta s\|_{L^2}^2
\]
\[
\leq \| \nabla (u \cdot \nabla s)\|_{L^2} \| \nabla \Delta s \|_{L^2} + \| \nabla (se ) \|_{L^2} \| \nabla \Delta s \|_{L^2}
\]
\[
+C \| e\|_{L^{\infty}} \| \Delta s \|_{L^2}^2 +C\|  e\|_{L^6} \| \nabla s\|_{L^3} \|\nabla \Delta s\|_{L^2} +C \|s\|_{L^{\infty}} \| \nabla e\|_{L^2} \| \nabla \Delta s \|_{L^2}.
\]
Using Young's inequality, the righthand side high order term $\| \nabla \Delta s \|_{L^2}^2$ can be absorbed in the lefthand side. By integrating with respect to time, we find
\[
\| \Delta s\|_{L^{\infty,2}_{t,x}(Q_{T})}^2 + \| \nabla \Delta s\|_{L^{2, 2}_{t,x}(Q_{T})}^2 \leq \| \Delta s_0 \|_{L^2}^2+C \|\Delta s \|_{L^{2,2}_{t,x}(Q_{T})}^2+C\| \nabla e \|_{L^2(Q_{T})}^2< \infty.
\]
$\bullet$ ($H^3$ estimate) Finally, we can obtain the following $H^3$ estimates for $s$. \\
By the use of maximal regularity of the heat equation, we have
\[
\| \partial_t e\|_{L^2_{t}H^2_x}+ \| \Delta e \|_{L^2_{t}H^2_x}\leq  C\left(\| e_0 \|_{H^3} +\| (u\cdot \nabla)e \|_{L^2_{t}H^2_x}+\| se \|_{L^2_{t}H^2_x}     \right)
\]
\[
\leq C\left(\| e_0 \|_{H^3} +\| u \|_{L^{\infty}_{t}H^2_x} \| \nabla e \|_{L^2_t H^2_x} +\| s\|_{L^{\infty}_t H^2_x} \| e \|_{L^2_t H^2_x}\right)< \infty,
\]
and
\[
\| \partial_t e\|_{L^2_{t}H^2_x}+ \| \Delta e \|_{L^2_{t}H^2_x}\leq C\left(\| e_0 \|_{H^3} +\| u \|_{L^{\infty}_{t}(H^3_x)} \| \nabla e \|_{L^2_t (H^3_x)} +\| s\|_{L^{2}_t (H^3_x)} \| e \|_{L^{\infty}_t (H^3_x)}\right)< \infty.
\]
Similarly to the previous $H^2$ estimates, we obtain
\[
\| \nabla \Delta s \|_{L^{2, \infty}_{x,t}(Q_{T})}^2 + \| \Delta ^2 s \|_{L^{2,2}_{x,t}(Q_{T})}^2 < \infty.
\]
 We are ready to prove Theorem \ref{thm2}.
\begin{pfthm2}
{\bf{(i)}} From the previous apriori estimates, only remaining estimates are about the estimates in $M_{n}$. As in \cite[Theorem 5.4.]{Kise-Ryz1}, the only nontrivial part is that the contraction constant depends on $H^m$ norm of $(s_0, e_0)$ and not on $M_n$ norm of $(s_0, e_0)$. In a different way, we provide the following direct estimates  for any integer $k\geq 1$ inductively :
\begin{align}\label{mn}
\begin{aligned}
\frac{d}{dt} \| |x|^k e \|_{L^2}^2 +\| |x|^k \nabla e \|_{L^2}^2 + \epsilon \int se^2 |x|^{2k} dx
&
\leq C \left( \| |x|^{k-1} e \|_{L^2}^2 + \| |x|^{k-\frac12} e \|_{L^2}^2 +1\right),
\\
\frac{d}{dt} \| |x|^k s \|_{L^2}^2 +\| |x|^k \nabla s \|_{L^2}^2  + \epsilon \int es^2 |x|^{2k} dx
&
\leq C \left( \| |x|^{k-1} s \|_{L^2}^2 +\| |x|^{k-1} e \|_{L^2}^2 \| |x|^{k-\frac12} s \|_{L^2}^2 +1\right).
\end{aligned}
\end{align}
By using Young's inequality and Gronwall's inequality, we can have
for any $T>0$, \[
\| |x|^k (s,e)\|_{L^{2, \infty}_{t,x}(Q_{T})}+ \| |x|^k \nabla(s, e)\|_{L^{2}_{t,x}(Q_{T})} < \infty.
\]
Similarly, we can have $\| |x|^k \nabla (s,e)\|_{L^{2, \infty}_{t,x}(Q_{T})} < \infty.$

This together with the previous $L^1$-estimates proves for any $n>0$ and $T>0$ $ \|(s,e)\|_{K_{m,n}}< \infty$.
This completes the proof of Theorem \ref{thm2} (i). \\

\noindent{\bf{Proof of Theorem \ref{thm2} (ii)}} For this regular solution obtained in Theorem \ref{thm2} (i), we can investigate the asymptotic behaviors of the total mass $m_s(t)$ and $m_{e}(t)$, especially in ${\R}^3$.\\
First, we show $ \| e(t)\|_{L^{\infty}} \rightarrow 0$ as $t \rightarrow \infty$.\\
To be concrete, we will show that 
\[
\| e(t) \|_{L^{\infty}} \leq \frac{C}{t^{\frac{d}{2}}}.
\]
We have
\[
\frac{1}{p} \frac{d}{dt} \int_{\R^d} |e(t) |^p dx +\frac{4(p-1)}{p^2} \int_{\R^d} |\nabla e^{\frac{p}{2}} |^2 dx \leq 0.
\]
Reminding that $\| f\|_{L^2(\R^d)} \leq C \| f \|_{L^1(\R^d)}^{\frac{2}{d+2}} \| \nabla f \|_{L^2(\R^d)}^{\frac{d}{d+2}}$, we note that
\[
C \| e \|_{L^p(\R^d)}^{\frac{p(d+2)}{d}}\| e \|_{L^{\frac{p}{2}}(\R^d)}^{-\frac{2p}{d}} \leq \| \nabla e^{\frac{p}{2}} \|_{L^2(\R^d)}^2.
\]
We have
\bq\label{eq-May-25-1}
\frac{d}{dt} \int_{\R^d} |e(t)|^p dx +C \| e \|_{L^p(\R^d)}^{\frac{p(d+2)}{d}}\| e \|_{L^{\frac{p}{2}}(\R^d)}^{-\frac{2p}{d}} \leq 0.
\eq
For convenience, we denote $y_p(t) := \| e(t)\|_{L^p(\R^d)}$. We show that for sufficiently large $t>T$ and $p=2^k$ with $k=1,2,\cdots$
\[
y_{2^k} (t) \leq \frac{C_k}{t^{\left(1-\frac{1}{2^k}\right)\frac{d}{2}}}.
\]
Indeed, for $k=1$, we have
\[
\frac{d}{dt}y_2^2(t) +C y_2^{\frac{2(d+2)}{d}} \leq 0.
\]
Solving the above differential inequality, we have
\[
y_2(t) \leq Ct^{-\frac{d}{4}}.
\]
Suppose that the above is true up to $k=m-1$ with $m>1$. Then we obtain
\[
\frac{d}{dt} y_{2^m}^{2^m}(t) + \frac{C}{C_{m-1}^{\frac{2^{m+1}}{d}}} t^{2^m-2} y_{2^m}^{\frac{d+2}{d} 2^m}
\]
\[
\leq \frac{d}{dt} y_{2^m}^{2^m}(t)+C y_{2^{m-1}}^{-\frac{2}{d} 2^m} y_{2^m}^{\frac{d+2}{d} 2^m} \leq 0.
\]
Solving the above inequality, we have \eqref{decay-e-22}.
Then we have
\[
y_{p}(t) \leq \frac{C}{t^{\left(1-\frac{1}{p} \right) \frac{d}{2}}}.
\]
Letting $p \rightarrow \infty$, we have
\[
\| e(t) \|_{L^{\infty}} \leq \frac{C}{t^{\frac{d}{2}}}.
\]
$\bullet$ (Total mass behavior of $m_{s}(t)$) It is ready to prove the lower bound of mass of the sperm cell density. Consider the case that  $d=3$.
We have the differential inequality
\[
\frac{d}{dt}\int_{\R^3} s(t) dx +\frac{C}{t^{\frac32}} \int_{\R^3} s(t) dx \geq 0,\mbox{ for } t\geq t_0.
\]
Then integrating with respect to time from $t_0$ until $t$ and setting $y =\int_{\R^3} s(t) dx$, we have
\[
\frac{dy}{y} \geq -\frac{C dt}{t^{\frac32}},
\]
and thus,
\[
y(t) \geq y(t_0) \exp \left(2C\left(\frac{1}{\sqrt{t}}-\frac{1}{\sqrt{t_0}}\right)   \right).\]
Since $t\geq t_0$, we have
\[
m_{s}(t) \geq  m_{s}(t_0).
\]
$\bullet$ ($L^2$ decay estimate of $s(t)$) To prove the lower bound of the mass for the egg cell density, we should obtain $L^2$ decay estimates for the sperm cell density.\\
Similarly, we obtain
\[
\frac{d}{dt} \| s\|_{L^2(\R^3)}^2 +\| \nabla s(t) \|_{L^2(\R^3)}^2 + \epsilon \int_{\R^3} es^2 dx= \chi \int_{\R^3} e s^2 \,\, dx.
 \]
 The right hand side of the above equality can be estimated by H\"{o}lder's and Sobolev's inequality as follows :
 \[
 \chi \int_{\R^3} e s^2 (t) \,\, dx \leq \chi \| e(t) \|_{L^{\frac32}(\R^3)} \| s(t) \|_{L^6(\R^3)}^2 \leq C\chi \| e(t)\|_{L^{\frac32}(\R^3)}\| \nabla s (t)\|_{L^2(\R^3)}^2.
     \]
     Since $\| e(t) \|_{L^{\frac32}(\R^3)} \leq \frac{C}{t^{\frac12}}$, we choose $t_0$ so large that $\frac{C \chi}{t_0^{\frac12}} <\frac12$. Hence we have
\[
   \frac{d}{dt} \| s(t) \|_{L^2(\R^3)}^2 +\frac12 \| \nabla s (t)\|_{L^2(\R^3)}^2 \leq 0.
\]
We infer that $ \| s(t) \|_{L^2} \leq \frac{C}{t^{\frac34}}$.\\
$\bullet$ (Total mass behavior of $m_{e}(t)$) Finally, we deduce that 
\[
 \frac{d}{dt} \int_{\R^3} e(x,t) dx = -\epsilon \int_{\R^3} es\,\, dx \geq -\epsilon \| e(t) \|_{L^2}\| s(t)\|_{L^2} \geq -\frac{C \epsilon}{t^{\frac32}}.
\]
Similarly, we have $m_{e}(t) \geq  m_{e}(t_0)$. \\
In the above, $C$ has the order $\frac{1}{\chi}$, it implies that lower bound approaches $0$ as $\chi\rightarrow \infty$.\\

\noindent{\bf{Proof of Theorem \ref{thm2} (iii)}} We already obtained the temporal decay of $e$, that is, \eqref{decay-e}, hence we only consider the temporal decay of $s$.\\
$\bullet$ ($2D$ case) We recall that the solution  $e$ to $\eqref{Bioreaction}_2$ satisfies the equation
\begin{equation}\label{eq-e-p}
\frac{1}{p} \frac{d}{dt} \int_{\R^2} |e|^p +\frac{4(p-1)}{p^2}\int_{\R^2} |\nabla e^{\frac{p}{2}} |^2 + \epsilon \int_{\R^2} e^p s =0.
\end{equation}
Multiplying a large constant $M$ on both sides of \eqref{eq-e-p} ($M$ will be specified later), we have
\begin{equation}\label{eq-e-p-2}
\frac{M}{p} \frac{d}{dt} \int_{\R^2} |e|^p +\frac{4M(p-1)}{p^2}\int_{\R^2} |\nabla e^{\frac{p}{2}} |^2 + M\epsilon \int_{\R^2} e^p s =0.
\end{equation}
Note first that the following interpolation inequality holds (see \cite{Kise-Ryz1})
\[
\| s \|_{L^{p+1}}^{p+1} \leq C \| s\|_{L^1} \| \nabla s^{\frac{p}{2}} \|_{L^2}^2.
\]
We compute
\[
\int_{\R^2} s^pe \leq \left( \int_{\R^2} s^{\frac{p^2-1}{p}\cdot \frac{p}{p-1}}  \right)^{\frac{p-1}{p}}\left( \int_{\R^2} e^p s \right)^{\frac{1}{p}}
\]
\[
=\left(\int_{\R^2} s^{p+1}    \right)^{\frac{p-1}{p}}\left( \int_{\R^2} e^p s \right)^{\frac{1}{p}} \leq CM^{-1} \int_{\R^2} s^{p+1} +\frac{M\epsilon}{2} \int_{\R^2} e^p s
\]
\[
\leq CM^{-1} \| s\|_{L^1} \| \nabla s^{\frac{p}{2}} \|_{L^2}^2 +\frac{M \epsilon}{2} \int_{\R^2} e^p s.
\]
The solution $s(t)$ satisfies that
\begin{equation}\label{eq-s-p}
\frac{1}{p} \frac{d}{dt} \int_{\R^2} |s|^p +\frac{4(p-1)}{p^2} \int_{\R^2} |\nabla s^{\frac{p}{2}} |^2 + \epsilon \int_{\R^2} s^p e \leq CM^{-1} \| s\|_{L^1} \| \nabla s^{\frac{p}{2}} \|_{L^2}^2 +\frac{M \epsilon}{2} \int_{\R^2} e^p s.
\end{equation}
Adding \eqref{eq-e-p-2} and \eqref{eq-s-p}, we have
\[
\frac{1}{p} \frac{d}{dt} \int_{\R^2} |s|^p+\frac{M}{p} \frac{d}{dt} \int_{\R^2} |e|^p +\frac{4(p-1)}{p^2} \int_{\R^2} |\nabla s^{\frac{p}{2}} |^2 +\frac{4M(p-1)}{p^2}\int_{\R^2} |\nabla e^{\frac{p}{2}} |^2 
\]
\begin{equation}\label{eq-s-p1}
 + \epsilon \int_{\R^2} s^p e + \frac{M\epsilon}{2} \int_{\R^2} e^p s \leq CM^{-1} \| s\|_{L^1} \| \nabla s^{\frac{p}{2}} \|_{L^2}^2.
\end{equation}
Taking $M = \frac{Cp^2 \| s_0 \|_{L^1}}{2(p-1)}$, we have
\[
\frac{1}{p} \frac{d}{dt} \int_{\R^2} |s|^p+\frac{M}{p} \frac{d}{dt} \int_{\R^2} |e|^p +\frac{2(p-1)}{p^2} \int_{\R^2} |\nabla s^{\frac{p}{2}} |^2 +\frac{4M(p-1)}{p^2}\int_{\R^2} |\nabla e^{\frac{p}{2}} |^2 
\]
\[
 + \epsilon \int_{\R^2} s^p e + \frac{M\epsilon}{2} \int_{\R^2} e^p s \leq 0. \]
This gives the decay estimate
\[
\| s(t) \|_{L^p}\leq \frac{C}{t^{1-\frac{1}{p}}}\mbox{ for } p \in (1, \, \infty).
\]

$\bullet$ ($3D$ case) We estimate
\[
\int_{\R^3} s^p e \leq \| e \|_{L^{\frac32} }\| s^p \|_{L^3} = \| e \|_{L^{\frac32}}\| s^{\frac{p}{2}} \|_{L^6}^2 \leq C \| e \|_{L^{\frac32}} \| \nabla s^{\frac{p}{2}} \|_{L^2}^2 .
\]
Due to \eqref{decay-e-22}, for any given $p>1$ and sufficiently small $\delta >0$, there exists $t_0$ such that
\[
\| e(t) \|_{L^{\frac32}} < \frac{\delta}{p}\quad\mbox{ for any }\, t \geq t_0.\, 
\]
Hence we deduce that for $t \geq t_0$,
\[
\frac{1}{p} \frac{d}{dt} \int_{\R^3} |s|^p +\frac{4(p-1)}{p^2} \int_{\R^3} |\nabla s^{\frac{p}{2}} |^2 + \epsilon \int_{\R^3} s^p e= \frac{p-1}{p} \int_{\R^3} s^p e
\]
\[
 \leq C \| e\|_{L^{\frac32}} \| \nabla s^{\frac{p}{2}}\|_{L^2}^2 \leq \frac{C\delta}{p} \| \nabla s^{\frac{p}{2}}\|_{L^2}^2 .
\]
Since $\delta$ is a sufficiently small positive constant, we immediately have
\[
\frac{1}{p} \frac{d}{dt} \int_{\R^3} |s|^p +\frac{2(p-1)}{p^2} \int_{\R^3} |\nabla s^{\frac{p}{2}} |^2 + \epsilon \int_{\R^3} s^p e \leq 0.
\]
This yields that
\[
\| s(t) \|_{L^p} \leq \frac{C}{t^{\frac32\left( 1-\frac{1}{p}  \right)}} \mbox{ for } p \in (1, \, \infty).
\]
This completes the proof of Theorem \ref{thm2}.
 \end{pfthm2}

\begin{remark}
In two dimensions, we have $\| e(t) \|_{L^{\infty}} \leq \frac{C}{t}$. Then via similar computations as above, we obtain
\[
m_{s}(t)\geq \left( \frac{t_0}{t}  \right)^{C} m_{s}(t_0)\mbox{ for } t \geq t_0.
\]
Hence, in two dimensions, we can not obtain the positive lower bound of the total mass via same method in three dimensions and leave as an open problem.  
\end{remark}
        
\section{Global well-posednes  for the model \eqref{se-NS-intro}}\label{sect3}
In this section, we prove the global well-posedness of solutions to the system \eqref{se-NS-intro}.
 \begin{equation*} \left\{
\begin{array}{ll}
\partial_t e + (u \cdot \nabla)  e - \Delta e= -\epsilon (se),\\
\vspace{-3mm}\\
\partial_t s + (u \cdot \nabla) s-\Delta s =-\chi \nabla \cdot (s \nabla c)-\epsilon(se),\\
\partial_t c+(u \cdot \nabla) c -\Delta c = e,\\
\partial_t u +\kappa ( u\cdot \nabla) u -\Delta u +\nabla p =-(s+e) \nabla \phi,\quad \rm{ div }\,\,u=0,\\
\end{array}
\right. \quad\mbox{ in }\,\, (x,t)\in \R^d\times (0,\, \infty),
\end{equation*}
 We will set $\kappa=1$ (Navier-Stokes system) when $d=2$ and $\kappa=0$ (Stokes system) when $d=3$ as mentioned in Section $1$.
 \\
 \indent Note that the solution $(e,\, s,\, c,\, u,\, p)$ satisfies the scaling invariant property if $\phi$ has the following scaling property : $\phi(x,t)=\phi^{\lambda}(x,t):= \phi(\lambda x, \, \lambda^2 t)$.
That is,
\[
(e^{\lambda}(x,t),\, s^{\lambda}(x,t),\, c^{\lambda}(x,t),\, u^{\lambda}(x,t),\, p^{\lambda}(x,t))
\]
\begin{equation}\label{scaling-escup}
=(\lambda^2 e(\lambda x, \lambda^2 t), \,\lambda^2 s(\lambda x, \lambda^2 t), \, c(\lambda x, \lambda^2 t), \,\lambda u(\lambda x, \lambda^2 t), \,\lambda^2 p(\lambda x, \lambda^2 t))
\end{equation}
is also a solution to \eqref{se-NS-intro} if  $(e,\, s,\, c,\, u,\, p)$ is a solution.\\
\indent
The local-in-time existence of  the solutions to \eqref{se-NS-intro} is  obtained by the contraction as for  Proposition \ref{prp1}. Hence we omit its proof. 
Moreover similar estimates as \eqref{mn}  for the $M_n $ norm of $(e, s, c)(\cdot, T)$ are bounded by 
$\| (e_0, s_0, c_0)\|_{M_n}$ and $\| (e, s, c, u)\|_{C(0, T; H^m)}$. Thus the local solution is extended if 
$\| (e, s, c, u))\|_{C(0, T; H^m)}$ is uniformly bounded. \\
\indent
Let $T^*$ be the maximal time of existence of the local solution
and $T$ be any time until $T^*$. In what follows we shall establish a priori 
estimates for $\| (e, s, c, u)\|_{C(0, T; H^m)}$ where $m= [ \frac d2]+1$. All integrations are over $Q_T$. We often omit 
$Q_T$ in $L^q_tL^p_x(Q_T)$.

$L^1$ estimates of $ e, s, c$ and $L^p$ estimates of $ e, c$ are immediate. We have
\[
\int_{\R^d} e(T)\, dx+ \epsilon \int_0^{T}\int_{\R^d} (se) dx  dt= \int_{\R^d} e_0 \,dx,
\]
\[
\int_{\R^d} s(T) \,dx+ \epsilon\int_0^T \int_{\R^d} (se) dx dt  =\int_{\R^d} s_0 \,dx,
\]
\[
\int_{\R^d} c(T) \,dx=\int_{\R^d} c_0 \,dx +\int_0^T\int_{\R^d} e(x,t)\, dxdt.
\]
For $1<p<\infty$ we have
\[
\frac{1}{p}\frac{d}{dt} \| e(t) \|_{L^p}^p +\frac{4(p-1)}{p^2}\|\nabla  e^{\frac{p}{2}}\|_{L^2}^2 + \epsilon \int_{\R^d}(se^{p}) (x,t) dx =0,
\]
\[
\frac{1}{p}\frac{d}{dt} \| c(t) \|_{L^p}^p +\frac{4(p-1)}{p^2}\|\nabla  c^{\frac{p}{2}}\|_{L^2}^2  =\int_{\R^d}( ec^{p-1}) (x,t) dx \leq \| e\|_{L^p} \| c\|_{L^p}^{p-1} .
\]
Hence 
it holds that 
 \begin{align*}
 \begin{aligned}
 \| e\|_{L^{\infty}(0, T^{*}; L^p)}^p + \| \nabla e^{\frac{p}{2}}\|_{L^2(0, T^{*}; L^2)}^2 & \leq C\| e_0 \|_{L^p}^p,\\
\| c\|_{L^{\infty}(0, T^{*}; L^p)}^p + \| \nabla c^{\frac{p}{2}}\|_{L^2(0, T^{*}; L^2)}^2& \leq C\left(\| c_0 \|_{L^p}+ \int_0^T \| e\|_{p} dt \right)^p \leq  C(\| c_0\|_{L^p} + T \|e_0\|_{L^p})^p.
\end{aligned}
\end{align*}
To obtain other $L^p$ and higher norm estimates we first consider the estimates of $u$;
\begin{equation}\label{NS-1}
\left\{\begin{array}{cl}
\partial_t u+( u \cdot \nabla) u -\Delta u +\nabla p =-(s+e) \nabla
\phi, \quad \nabla \cdot u=0\qquad
&\mbox{in } \R^2\times (0, T)\\
u(x,\, 0)=u_0(x) \qquad &\mbox{ in } \R^2.
\end{array}\right.
\end{equation}
 Let us denote the Stokes operator by $G_t$. Namely $G_t \ast u_0$ is the solution of the free Stokes equations
 ($f=0$)
\[ \partial_t u-\Delta u +\nabla p =f , \quad \nabla \cdot u = 0 \]
with initial data $u_0$. It is well known that $G_t$ satisfies that (see e.g. \cite{Gi-So})
\begin{align}\label{stokes2}
\| G_t * f\|_{L^p}  \le  C t^{\frac 1p -1} \| f\|_{L^1} , \quad
\| \na G_t * f\|_{L^p}  \le  C t^{\frac 1p -\frac 32 } \| f\|_{L^1} \quad 1 \le p \le \infty
\end{align}
in two dimensions.
For the inhomogeneous Stokes equations the following maximal regularity estimate is known \cite{Gi-So}; 
\begin{align}\label{stokesmax}
\int_0^T  \| \pa_t u \|_{L_x^p}^q dt + \int_0^T \| \Del u\|_{L_x^p}^q dt + \int_0^T \|\nabla p\|_{L^p} ^q dt 
 \le C\left(\| u_0\|_{W^{2,p}}^{q} + \int_0^T \| f\|_{L^p}^q dt \right)
\end{align}
for $ 1<p, q <\infty$.
\begin{lemma}\label{uL4}
Let $d=2$ and $s, e, u$ be the local solution of \eqref{se-NS-intro} in $K_{m,n}$.
The solution $u$ to \eqref{NS-1} belongs to $L^{\infty}(0, T ; L^2)
\cap L^2(0, T; W^{1,q})\cap L^4(0, T ; L^4)$ for any $q\in
[1,2)$. 
\end{lemma}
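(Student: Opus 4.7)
The plan is to obtain the three claimed regularity statements in turn via three distinct arguments, each leveraging either the two-dimensional setting or the $L^p$-bounds on $s,e$ already established in Section \ref{sect2}.

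\emph{Step 1 (Basic energy estimate).} I test \eqref{NS-1} against $u$. The convective term vanishes after integration by parts by $\nabla\cdot u=0$, and the pressure drops for the same reason. This yields
\begin{equation*}
\tfrac12\tfrac{d}{dt}\|u\|_{L^2}^2+\|\nabla u\|_{L^2}^2 \le \|s+e\|_{L^2}\,\|\nabla\phi\|_{L^\infty}\,\|u\|_{L^2}.
\end{equation*}
Since $s,e\in L^\infty(0,T;L^2)$ from Section \ref{sect2} and $\nabla\phi$ is bounded, Cauchy--Schwarz followed by Gronwall delivers $u\in L^\infty(0,T;L^2)\cap L^2(0,T;H^1)$.

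\emph{Step 2 (The $L^4_{t,x}$ bound).} The two-dimensional Ladyzhenskaya inequality $\|u\|_{L^4}^4\le C\|u\|_{L^2}^2\|\nabla u\|_{L^2}^2$ combined with Step 1 gives
\begin{equation*}
\int_0^T\|u(t)\|_{L^4}^4\,dt \le C\|u\|_{L^\infty_tL^2_x}^2\int_0^T\|\nabla u(t)\|_{L^2}^2\,dt<\infty,
\end{equation*}
so $u\in L^4(0,T;L^4)$.

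\emph{Step 3 (The $W^{1,q}$-bound for $q\in[1,2)$).} Writing the Duhamel formula
\begin{equation*}
u(t)=G_t*u_0-\int_0^t G_{t-\tau}*\mathbb{P}\bigl[(u\cdot\nabla)u+(s+e)\nabla\phi\bigr](\tau)\,d\tau,
\end{equation*}
I exploit that $(s+e)\nabla\phi\in L^\infty_tL^1_x$ by the mass estimates of Section \ref{sect2}, while the nonlinear piece $(u\cdot\nabla)u$ belongs to $L^2_tL^1_x$ via $\|(u\cdot\nabla)u\|_{L^1}\le\|u\|_{L^2}\|\nabla u\|_{L^2}$ together with Step 1. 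Denoting the total forcing by $F$, the Stokes kernel estimate \eqref{stokes2} gives
\begin{equation*}
\|\nabla G_{t-\tau}*F(\tau)\|_{L^q_x}\le C(t-\tau)^{1/q-3/2}\|F(\tau)\|_{L^1_x},
\end{equation*}
whose temporal kernel $\sigma^{1/q-3/2}$ is integrable on $(0,T)$ precisely when $q<2$. Young's convolution inequality in $\tau$ then yields $\nabla u\in L^2(0,T;L^q)$ for all $q\in[1,2)$; the initial-datum contribution $\nabla G_t*u_0$ is bounded using $u_0\in H^m$ together with the heat-semigroup counterpart of \eqref{stokes2}.

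The principal subtlety is the borderline behavior of the temporal kernel as $q\to 2^-$, where the exponent $1/q-3/2$ approaches $-1$ and the time integral diverges: this is what forces the restriction $q<2$. The endpoint $q=1$ is in fact the easiest case, since the kernel is $\sigma^{-1/2}$ and the constant $2\sqrt T$ from $\int_0^T\sigma^{-1/2}\,d\sigma$ can be absorbed cleanly. All three estimates combine to give the lemma.
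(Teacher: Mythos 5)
Your Steps 2 and 3 are sound in themselves (Ladyzhenskaya for the $L^4_{t,x}$ bound, and the Duhamel/Stokes-kernel argument with the $(t-\tau)^{1/q-3/2}$ kernel forcing $q<2$ is essentially what the paper does for its remainder term $w$). The problem is Step 1: you close the energy estimate using $s\in L^\infty(0,T;L^2)$, citing Section \ref{sect2}. That bound is not available here. Section \ref{sect2} treats the system \eqref{Bioreaction}, where the chemotactic term $\chi\nabla\cdot(s\nabla\Delta^{-1}e)$ contributes $\frac{p-1}{p}\chi\int es^p$ and is controlled by $\|e\|_{L^\infty}$. For the coupled system \eqref{se-NS-intro} the term is $-\chi\nabla\cdot(s\nabla c)$ with $c$ solving a parabolic equation, and the $L^p$ estimate for $s$ (see \eqref{2ds}--\eqref{s_lp}) requires $\nabla c\in L^4_{t,x}$, which via \eqref{2dc} requires $u\in L^4_{t,x}$ --- i.e.\ the very conclusion of this lemma. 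Your argument is therefore circular: at the point where the lemma is needed, the only bounds on $s$ at your disposal are the conserved mass $\|s\|_{L^\infty_tL^1_x}\le\|s_0\|_{L^1}$ (the $L^\infty_tL^p$ bound on $e$ is fine, but $e$ alone does not save the forcing term, which involves $s+e$).

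This is exactly why the paper decomposes $u=v+w$ rather than running a single energy estimate. The Stokes part $v$, with forcing $f=-(s+e)\nabla\phi\in L^{\infty,1}_{t,x}$ controlled purely by $\|s_0\|_{L^1}+\|e_0\|_{L^1}$, is bounded in $L^{\infty,p}_{t,x}$ for all $p\in[1,\infty)$ and in $\nabla v\in L^{\infty,q}_{t,x}$ for $q\in[1,2)$ using the $L^1\to L^p$ smoothing of the Stokes semigroup \eqref{stokes2}; the remainder $w$ solves a perturbed Navier--Stokes system with \emph{zero} forcing and zero data, so its energy estimate closes using only $\|v\|_{L^4_{t,x}}$ and Gronwall, with no reference to $\|s\|_{L^2}$ at all. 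To repair your proof you would need to replace Step 1 by this (or an equivalent) device that uses only the $L^1$ control of $s+e$; as written, the lemma's quantitative content --- a bound depending only on the data, which is what the continuation argument for Theorem \ref{thm-exist-es-NS}(i) requires --- is not established.
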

\begin{proof}
We remind that total masses of $s$ and $e$ are preserved.
Thus, $s\nabla \phi,\, e \nabla \phi$ belong to $L^{\infty}([0,T_0);L^1(\R^2))$,
since $\phi$ is assumed to satisfies $\norm{\nabla^l
\phi}_{L^{\infty}}<\infty$ for $1\le\abs{l}\le m$.\\
Let $Q:=(0, T)\times \bbr^2 $.
 We decompose the solution $u$ to the
equations \eqref{NS-1} to  $v + w$ in $Q$, where $v$ satisfies the Stokes system:
\begin{equation}\label{Stokes-1}\left\{
\begin{array}{cl}
\partial_t v -\Delta v +\nabla p_1 =-(s+e) \nabla
\phi, \qquad {\rm{div}}\,\, v=0\qquad &\mbox{in } Q,\\
v(x,\, 0)=u_0(x) \qquad &\mbox{ in } \R^2,
\end{array}\right.
\end{equation}
and $w$ satisfies
 a perturbed homogeneous Navier-Stokes equations with zero initial data:
 \begin{equation}\label{perturbed}\left\{
\begin{array}{cl}
\partial_t w -\Delta w +\nabla p_2 =-((v+w)\cdot\nabla)v -((v+w)\cdot \nabla)w,\quad
{\rm{div}}\,\, w=0,&\mbox{in } Q,\\
w(x,\, 0)=0\qquad& \mbox{ in } \R^2.
\end{array}\right.
\end{equation}
 For convenience, we denote $f:=-(s+e)\nabla\phi$.
 By \eqref{stokes2}  we have
\[
\| v\|_{L^{\infty, p}_{t,x}(Q_T)}\leq C\| u_0 \|_{L^p}+C
\left(\int_0^{T_0} t^{\frac{1}{p}-1} dt\right) \| f \|_{L^{
\infty,1}_{t,x}(Q_T)}< \infty.
\]
for any $p \in [1,\, \infty)$.
Similarly,  we have
\[
\norm{\nabla v}_{L^{\infty, q}_{t,x}(Q_T)}\leq C\| \nabla u_0
\|_{L^q}+C \left(\int_0^{T_0} t^{\frac{1}{q}-\frac{3}{2}} dt\right)
\| f \|_{L^{\infty,1}_{t,x}(Q_T)}< \infty
\]
for any $ q \in [1,2)$.
Note that  $\| f \|_{L^{\infty,1}_{t,x}(Q_T)}\leq
C(\norm{s_0}_{L^1(\R^2)}+\norm{e_0}_{L^1(\R^2)})$. Summing up, we obtain
\begin{equation}\label{estimate-v}
\| v\|_{L^{ \infty,p}_{t,x}(Q_T)}+\norm{\nabla v}_{L^{
\infty,q}_{t,x}(Q_T)}\le C=C(T_0),\qquad p \in [1,\, \infty),\quad q\in
[1,\, 2),
\end{equation}
which yields that
\[ v \in  L^{\infty}(0, T_0; L^2)\cap L^2(0, T_0; W^{1, q} )\cap L^4(0, T_0; L^4) \quad
q\in [1, 2).\]
%
For the Navier-Stokes part $w$, we estimate
\[
\frac12 \frac{d}{dt} \| w \|_{L^2}^2 +\| \nabla w \|_{L^2}^2\leq
\left| \int_{\R^2} ((v+w) \cdot \nabla) w \cdot v dx \right|
\]
\[
\leq \| v\|_{L^4}^2 \| \nabla w \|_{L^2} + \| w\|_{L^4} \| \nabla w \|_{L^2} \| v\|_{L^4}
\]
\[ \leq   \| v\|_{L^4}^2 \| \nabla w \|_{L^2} + C \|w\|_{L^2}^{\frac 12} \| \na w\|_{L^2}^{\frac 32} \|v\|_{L^4} \]
\[
\leq \frac12 \| \nabla w\|_{L^2}^2 +C \| v\|_{L^4}^4(\| w\|_{L^2}^2 +1)
\]
which implies
\[ w\in L^{\infty,2}_{t,x}(Q_T) \cap L^2(0, T; {H}^1_0)\]
by the Gronwall's inequality. 
It
remains to show that 
$ w\in \bigcap_{1 \leq q <2} L^2(0, T; W^{1,q})$. 
Using the  Stokes operator, we write  $w$ as 
\[
\nabla w(x,\, t) =-\int_0^t \nabla G_{t-s} *\bke{(v\cdot\nabla)
v+(v\cdot\nabla) w+(w\cdot\nabla) v+(w\cdot\nabla) w}(s) ds.
\]
\[
=-\int_0^t \nabla G_{t-s} *((v\cdot\nabla) v(s)) ds-\int_0^t \nabla
G_{t-s}
*((v\cdot\nabla) w(s)) ds
\]
\[
-\int_0^t \nabla G_{t-s} *((w\cdot\nabla) v(s)) ds-\int_0^t \nabla
G_{t-s}
*((w\cdot\nabla) w(s)) ds:=I_1+I_2+I_3+I_4.
\]
What it follows,  we separately compute $I_i$, $i=1,2,3,4$.
\[
\norm{I_1(t)}_{L^q}\leq \int_0^t \| \nabla G_{t-s} * ((v\nabla)
v)(s)\|_{L^q} ds \leq C \int_0^t (t-s)^{\frac{1}{q}-\frac{3}{2}}\|
v\nabla v \|_{L^1(\R^2)}(s) ds
\]
\[
\leq C \int_0^t (t-s)^{\frac{1}{q}-\frac{3}{2}}\norm{
v}_{L^{q'}}(s)\norm{\nabla v}_{L^q}(s) ds\leq C(T_0)\norm{
v}_{L^{\infty, q'}_{t,x}(Q_T)}\norm{ \nabla
v}_{L^{\infty, q}_{t,x}(Q_T)}.
\]
Similarly,
\[
\norm{I_2(t)}_{L^q}
\leq C \int_0^t (t-s)^{\frac{1}{q}-\frac{3}{2}}\| v\nabla w
\|_{L^1(\R^2)}(s) ds
\]
\[
\leq C\norm{ v}_{L^{\infty,2}_{t,x}(Q_T)}\int_0^t
(t-s)^{\frac{1}{q}-\frac{3}{2}}\| \nabla w \|_{L^2(\R^2)}(s) ds.
\]
Therefore, using the convolution inequality, we have
\[
\norm{I_2}_{L^{2,q}_{t,x}(Q_T)}\leq C(T_0)\norm{
v}_{L^{\infty,2}_{t,x}(Q_T)}\norm{\nabla w}_{L^2_{t,x}(Q_T)}.
\]
For $I_3$, using $w\in L^4(Q_T)$, we observe that
\[
\norm{I_3(t)}_{L^q}
\leq C \int_0^t (t-s)^{\frac{1}{q}-\frac{3}{2}}\| w\nabla v
\|_{L^1(\R^2)}(s) ds
\]
\[
\leq C\int_0^t (t-s)^{\frac{1}{q}-\frac{3}{2}}\|  w
\|_{L^4(\R^2)}(s)\| \nabla v \|_{L^{\frac{4}{3}}(\R^2)}(s) ds
\]
\[
\leq C\| \nabla v \|_{L^{\infty, \frac{4}{3}}_{t,x}(Q_T)}\int_0^t
(t-s)^{\frac{1}{q}-\frac{3}{2}}\|  w \|_{L^4(\R^2)}(s)(s) ds.
\]
Using the convolution inequality again, we obtain
\[
\norm{I_3}_{L^{2,q}_{t,x}(Q_T)}\leq C(T)\norm{ \nabla
v}_{L^{\infty, \frac{4}{3}}_{t,x}(Q_T)}\norm{ w}_{L^4_{t,x}(Q_T)}.
\]
Finally, we compute
\[
\norm{I_4(t)}_{L^q} \leq C \int_0^t
(t-s)^{\frac{1}{q}-\frac{3}{2}}\| w\nabla w \|_{L^1(\R^2)}(s) ds
\]
\[
\leq C \int_0^t (t-s)^{\frac{1}{q}-\frac{3}{2}}\norm{
w}_{L^{2}}(s)\norm{\nabla w}_{L^2}(s) ds
\]
\[
\leq C\norm{ w}_{L^{\infty,2}_{t,x}(Q_T)}\int_0^t
(t-s)^{\frac{1}{q}-\frac{3}{2}}\| \nabla w \|_{L^2(\R^2)}(s) ds.
\]
Similarly we get
\[
\norm{I_4}_{L^{2,q}_{t,x}(Q_T)}\leq C(T)\norm{
w}_{L^{\infty,2}_{t,x}(Q_T)}\norm{\nabla w}_{L^2(Q_T)}.
\]
Summing up estimates, we obtain that $\nabla w\in \bigcap_{ 1\leq q <2} L^2(0, T;
L^{q}(\R^2))$.
This completes the proof.
\end{proof}
\begin{remark}
If we consider 
\begin{equation}\label{NS-2}
\begin{array}{cl}
\partial_t u-\Delta u +\nabla p =-(s+e) \nabla
\phi, \quad \nabla \cdot u=0\qquad
&\mbox{in } \R^3\times (0, T)\\
u(x,\, 0)=u_0(x) \qquad &\mbox{ in } \R^3.
\end{array}
\end{equation}	
then similarly to Lemma \ref{uL4}, we can prove that the solution $u$ to \eqref{NS-2} belongs to $L^{\infty} (0, T; L^p) \cap L^{\infty}(0, T;W^{1,q})$ for any $ p \in [1, 3)$ and $q \in [1, \frac32)$.
\end{remark}
We proceed other $L^p$ and higher order estimates to conclude the global well-posedness part of Theorem $2$. 
We treat spatial two and three dimensional cases separately.\\

{\bf{Proof of Theorem \ref{thm-exist-es-NS} {\bf{(i)}} ($d=2$)}}  If we consider the equation
\[
\partial_t c -\Delta c =-\nabla \cdot (uc) +e,
\]
then by the maximal regularity of the heat equation \eqref{max}  we obtain
\begin{equation}\label{2dc}
\| \nabla c \|_{L^{4}_{t,x}} \leq C(\|c\|_{L^{\infty}_{t,x}}\| u \|_{L^{4}_{t,x}} +\| e\|_{L^4_tL^{ 3/2}_x})+\| \nabla c_0 \|_{L^{4}_x}< \infty,
\end{equation}
where the second inequality is due to  $L^p$ esitmates of $c, e$ and Lemma \ref{uL4}. Multiplying both sides of the equation of $s$ by $s^{p-1}$ and integrating over $\R^2$, we deduce that
\begin{align}\label{2ds}
\begin{aligned}
\frac{1}{p}\frac{d}{dt} \| s\|_{L^p}^p + & \frac{4(p-1)}{p^2} \| \nabla s^{\frac{p}{2}}\|_{L^2}^2  \leq \frac{2(p-1)}{p} \chi \left| \int_{\R^2} s^{\frac{p}{2}} \nabla c \cdot \nabla s^{\frac{p}{2}} \right| \\
&
\leq \frac{2(p-1)}{p} \chi \| s^{\frac{p}{2}} \|_{L^2}^{\frac12} \| \nabla c\|_{L^4} \| \nabla s^{\frac{p}{2}} \|_{L^2}^{\frac32}
\\
&
\leq C \chi^4 \| \nabla c\|_{L^4_x}^4 \| s\|_{L^p}^p +\frac{2(p-1)}{p^2} \| \nabla s^{\frac{p}{2}}\|_{L^2}^2.
\end{aligned}
\end{align}
Hence we have
\begin{equation}\label{s_lp}
\sup_{ 0 \leq t \leq T} \| s(t)\|_{L^p}^p \leq \| s_0 \|_{L^p}^p \exp \left( C \chi^4 \| \nabla c\|_{L^4_{t,x}}^4  \right)< \infty\mbox{ for all } p \in [2,\, \infty).
\end{equation}
Therefore, $ s \in L^{\infty}(0,\, T; L^p)$ and $\nabla s^{\frac{p}{2}} \in L^2(0,\, T; L^2)$ for all $ p \in [2,\, \infty)$.\\
On the other hand, we have
\[
\frac12 \frac{d}{dt} \| u \|_{L^2}^2 +\| \nabla u \|_{L^2}^2 \leq C( \| s\|_{L^2} +\| e\|_{L^2}) \| u \|_{L^2}.
\]
It gives us that $u \in L^{\infty}(0,\, T; L^2)$ and $\nabla u \in L^2(0,\, T; L^2)$.
\[
\frac12 \frac{d}{dt} \| \nabla u \|_{L^2}^2 + \| \Delta u \|_{L^2}^2 \leq C( \| s\|_{L^2}^2 +\| e\|_{L^2}^2 + \| \nabla u \|_{L^2}^4) + \frac12 \| \Delta u \|_{L^2}^2.
\]
Therefore, we also have $\nabla u \in L^{\infty}(0,\, T; L^2)$ and $\Delta u \in L^2(0,\, T; L^2)$, that is 
\begin{equation}\label{u_0th}
u \in L^{\infty}(0,\, T; L^2), \quad \nabla u \in L^{\infty}(0,\, T; L^2), \quad \Delta u \in L^2(0,\, T; L^2).
\end{equation}
In general the maximal regularity of the heat equation and the $L^p$ estimates of $c, e$ yield that 
\begin{align}\label{c_0th}
\begin{aligned}
\| \nabla c\|_{L^p_{t,x}} &\leq C( \| u \|_{L^p_{t,x}} +1) < \infty, \\
\|  \Delta c\|_{L^p_{t,x}} & \leq C(\| u \cdot \na c\|_{L^p_{t,x}} + 1)  \le 
C( \| u \|_{L^{q}_{t,x}}\| \na c\|_{L^{pq/(q-p)}_{t,x}} +1) < \infty
\end{aligned}
\end{align}
 for all $ p \in [2,\, \infty)$ and $q>p$. We can replace $c$ with $e$ in the above.
Applying the maximal regularity of the heat equation to  $s$ equation together with the previous estimates,
 we have
\begin{align}\label{s_0th}
\| \nabla s \|_{L^p_{t,x}} ,\, \| \Delta s \|_{L^p_{t,x}} < \infty\mbox{ for all } p \in [2, \, \infty).
\end{align}
Then by the bootstraping argument, we complete the proof of the Case I. 
{Indeed \eqref{u_0th} and \eqref{s_0th} yields $L^p$ estimate for $\na c, \na e$.  Then $L^p$ estimate of $\na s$ follows from  the boundedness of $\| \Del c\|_{L^p_{t,x}}$ in \eqref{c_0th} 
as is obtained $\|s\|_{L^p}$ in  \eqref{s_lp}. Those $L^p$ estimates are used to yield 
$\na u \in L^{\infty}(0,\, T; L^2),  \nabla^2 u \in L^{\infty}(0,\, T; L^2),  \na^3 u \in L^2(0,\, T; L^2)$, which closes the $H^1$ estimate of $ e,c,s,u$.  Maximal regularity estimates for $\na c, \na e, \na s$ prove 
the boundedness of  $ \| \na c, \na e, \na s \|_{L^p_t W^{2, p}_x} $ for all $p \in [2, \infty)$, which corresponds to one more derivative version of \eqref{c_0th} and \eqref{s_0th}.  The $H^2$ estimates can be similarly done. }\\

{\bf{(ii)}} ($d=3$) We assume that $\chi^2 \| \nabla \phi\|_{L^{\infty}}^2 \| s_0 \|_{L^1}^2$ is sufficiently small. Note that $\chi^2 \| \nabla \phi\|_{L^{\infty}}^2 \| s_0 \|_{L^1}^2$ is scaling invariant quantity.  \\
\indent 
{In the three dimensional case the regularity of $u$ obtained in Lemma \ref{uL4} is not enough to prove \eqref{2dc} and \eqref{2ds} as is in two dimensions.  We need to prove an entropy type inequality for $s$ \eqref{May-2nd-5} for three dimensions.}
Taking $\log s$ as a test function for the equation $\eqref{se-NS-intro}_2$, we have
\[
\frac{d}{dt} \int_{\R^3}s \log s dx + \int_{\R^3} | \nabla \sqrt{s}|^2 dx= \int_{\R^3} \chi \nabla s \cdot \nabla c dx - \epsilon\int_{\R^3} se(1+ \log s) dx
\]
We estimate
\[
\left|\int_{\R^3} \chi \nabla s \nabla c \right| = 2 \chi \left|\int_{\R^3} \sqrt{s} \nabla \sqrt{s} \cdot \nabla c dx \right|\leq C \chi \| \nabla \sqrt{s} \|_{L^2} \| \sqrt{s} \|_{L^{\frac{30}{11}}} \| \nabla c \|_{L^{\frac{15}{2}}} \]
\[
\leq \frac12 \| \nabla \sqrt{s} \|_{L^2}^2 + C\chi^2 \| s \|_{L^{\frac{15}{11}}} \| \nabla c\|_{L^{\frac{15}{2}}}^2.
\]
Also we note that 
\[
-\int_{\{ x: s(x) \leq 1\}} se \log s dx \leq C \int_{\R^3} e dx.
\]
Hence we deduce that
\[
\frac{d}{dt} \int_{\R^3}s \log s dx + \frac12\int_{\R^3} | \nabla \sqrt{s}|^2 dx \leq C\chi^2 \| s \|_{L^{\frac{15}{11}}} \| \nabla c\|_{L^{\frac{15}{2}}}^2
\]
\[
\leq C\left( \chi^2\| s \|_{L^1}^{\frac35} \| s \|_{L^3}^{\frac25} \| \nabla c \|_{L^{\frac{15}{2}}}^2+\| e_0 \|_{L^1}\right) \leq C\left(\chi^2\| s_0 \|_{L^1}^{\frac35}\| s \|_{L^3}^{\frac25} \| \nabla c \|_{L^{\frac{15}{2}}}^2+\| e_0 \|_{L^1}\right) .
\]
Integrating in time gives us that
\[
\int_{\R^3}  s(t) \log s(t) dx - \int_{\R^3}s_0 \log s_0 dx + \frac12\int_0^t\int_{\R^3} | \nabla \sqrt{s}|^2 dxds \]
\[\leq C\left[\chi^2 \| s_0 \|_{L^1}^{\frac35}\left( \int_0^t \| s \|_{L^3} ds \right)^{\frac25}\left(\int_0^t \| \nabla c \|_{L^{\frac{15}{2}}}^{\frac{10}{3}}ds    \right)^{\frac35}+t\| e_0 \|_{L^1} \right].
\]
Considering the equation of $c$
\[
c_t -\Delta c =-\nabla \cdot (u c) +e,
\]
and by the fact that $ e \in L^{\infty}_{t,x}$, we have
\[
\| \nabla c \|_{L^{\frac{10}{3},\frac{15}{2}}_{t,x}}\leq C (\|u c \|_{L^{\frac{10}{3},\frac{15}{2}}_{t,x}}+\| e \|_{L^{\frac{10}{3},\frac{15}{2}}_{t,x}})+\| \nabla c_0 \|_{L^{\frac{15}{2}}_{x}}
\]
\[
\leq C ( \| u \|_{L^{\frac{10}{3},\frac{15}{2}}_{t,x}}+1) \leq  C ( \|\nabla^2 u \|_{L^{ \frac{10}{3},\frac{5}{4}}_{t,x}}+1) \leq C( \| s \|_{L^{\frac{10}{3},\frac{5}{4}}_{t,x}}\| \nabla \phi \|_{L^{\infty}_{t,x}}+1),
\]
where the last inequality is from \eqref{stokesmax}.
Since we have $\| s \|_{L_x^{\frac{5}{4}}} \leq C \| s \|_{L_x^1}^{\frac{7}{10}} \| s \|_{L_x^{3}}^{\frac{3}{10}}$, we deduce that
 \[
\int_{\R^3}  s \log s dx - \int_{\R^3}s_0 \log s_0 dx + \frac12\int_0^t\int_{\R^3} | \nabla \sqrt{s}|^2 dxds\]
\[ \leq C\chi^2 \| s \|_{L^{1,3}_{t,x}}^{\frac25} \left( \| s \|_{L^{1,3}_{t,x}}^{\frac35}\| \nabla \phi  \|_{L^{\infty}_{t,x}}^2\| s_0 \|_{L^1_x}^{2}+1\right)+ Ct \| e_0 \|_{L^1}
\]
\[
\leq \left[C_{*}\chi^2\| \nabla \phi  \|_{L^{\infty}_{t,x}}^2\| s_0 \|_{L^1_x}^{2}+\frac18\right]\| \nabla \sqrt{s}\|_{L_{t,x}^2}^2+Ct.
\]
Therefore, from the assumption that $C_{*}\chi^2\| \nabla \phi \|_{L^{\infty}_{t,x}}^2\| s_0 \|_{L^1_x}^{2}\leq \frac18$, then we can have
\bq\label{May-2nd-2}
\int_{\R^3}  s \log s dx+\frac14\int_0^t\int_{\R^3} | \nabla \sqrt{s}|^2 dxd\tau < Ct.
\eq
{Let  $(\log s)_{-}$ be a negative part of $\log s$ and $\langle x \rangle=(1+|x|^2)^{\frac12}$. Decomposing the domain $\{ x | s(x) \le 1\} $ into
$ D_1\cup D_2 = : \{ x| 0 \le s(x) \le e^{-|x|} \} \cup \{ x|   e^{-|x|}  \le s(x) \le 1 \}$
and using $x (\log x)_- < C\sqrt x$ for the integral over $D_1$, we have}
\bq\label{May-2nd-1}
\int_{\R^3} s (\log s)_{-} \leq C\int_{\R^3} e^ { - |x|} + C \int_{\R^3} \langle x\rangle s.
\eq

Integration by parts gives us that
\[
\frac{d}{dt} \int_{\R^3} \langle  x\rangle s = \int_{\R^3} s (u \cdot \nabla) \langle x \rangle + \int_{\R^3} s \Delta \langle x\rangle +\int_{\R^3} \chi n \nabla c \cdot \nabla \langle x \rangle -\epsilon \int_{\R^3} \langle x \rangle se.
\]
Since $|\nabla \langle x \rangle | +|\Delta \langle x \rangle | \leq C $, we have
\[
\left| \int_{\R^3} s (u\cdot \nabla) \langle x \rangle     \right|\leq C \| \sqrt{s} \|_{L^\frac{12}{5}}^2 \| u \|_{L^6} \leq C \| \sqrt{s} \|_{L^2}^{\frac32}\| \nabla \sqrt{s} \|_{L^2}^{\frac12}\| u \|_{L^6}
\]
\[
\leq \delta \| \nabla \sqrt{s} \|_{L^2}^2 +C \| s_0 \|_{L^1} \| \nabla u\|_{L^2}^2 
\]
 and
\[
\left| \int_{\R^3} s \Delta \langle x \rangle  \right|+ \left| \int_{\R^3} \chi s \nabla c \cdot \nabla \langle x \rangle  \right| \leq C+C \| \nabla \sqrt{s} \|_{L^2}^{\frac12} \| \nabla c \|_{L^6}
\]
\[
\leq C+ \delta \| \nabla \sqrt{s} \|_{L^2}^2+C \| \nabla c \|_{L^6}^2,
\]
for sufficiently small $\delta >0$.\\
Also from the equation $\partial_t c -\Delta c =-\nabla \cdot (uc) +e$, we have
\[
\| \nabla c\|_{L^{2,6}_{t,x}}^2 \leq C(\| uc \|_{L^{2,6}_{t,x}}^2 +1) \leq C (\| u \|_{L^{2,6}_{t,x}}^2 +1) \leq C(\| \nabla u \|_{L^{2,2}_{t,x}}^2+1).
\]
Considering \eqref{May-2nd-1} and adding $2 \int_{\R^3} s(\log s)_{-}$ on the both sides of \eqref{May-2nd-2}, we obtain
\bq \label{May-2nd-3}
\int_{\R^3}  s(t) |\log s(t)| dx+\frac18\int_0^t\int_{\R^3} | \nabla \sqrt{s}|^2 dxd\tau < C(t+1)+ C_{**} \int_0^t \| \nabla u \|_{L^2}^2.
\eq
From the equation of $u$, we deduce that
\[
\frac12 \frac{d}{dt} \| u \|_{L^2}^2 +\| \nabla u \|_{L^2}^2 \leq C(\| s \|_{L^{\frac65}}+\| e \|_{L^{\frac65}}) \| u \|_{L^6}^2
\]
\[
\leq C +\delta \| \nabla \sqrt{s} \|_{L^2}^2 +\delta \| \nabla u \|_{L^2}^2.
\]
Multiplying $4C_{**}$ on the both sides of the above inequality and integrating with respect to time, we have
\bq\label{May-2nd-4}
2C_{**}\int_{\R^3} |u|^2 (t) dx + 2C_{**}\int_0^t \int_{\R^3} |\nabla u |^2 \leq Ct + 4C_{**}\delta \int_0^t \| \nabla \sqrt{s} \|_{L^2}^2 d\tau.
\eq
If we add \eqref{May-2nd-3} and \eqref{May-2nd-4}, then we have
\[
\int_{\R^3}  s(t) |\log s(t)| dx+2C_{**}\int_{\R^3} |u|^2 (t) dx
\]
\bq\label{May-2nd-5}
+\frac{1}{16}\int_0^t\int_{\R^3} | \nabla \sqrt{s}|^2 +C_{**}\int_0^t \int_{\R^3} |\nabla u |^2 \leq C(1+t).
\eq
Hence we have
\[
\nabla \sqrt{s} \in L^2(0, t; L^2(\R^3))\mbox{ i.e., } s \in L^1(0, t; L^3(\R^3)). 
\]
From the interpolation, it gives us that
\bq \label{3ds}
s \in L^{q,p}_{t,x} \quad\mbox{ with }\quad\frac{3}{p}+\frac{2}{q}=3,\,\, 1 \leq p \leq 3.
\eq
By the maximal regularity estimate for Stokes equation \eqref{stokesmax}, we obtain 
\[
\| \Delta u \|_{L^{ 5,\frac{15}{13}}_{t,x}}^5 \leq C(\| u_0 \|_{W^{2, \frac{15}{13}}}+ \| s \|_{L^{5,\frac{15}{13}}_{t,x}}^5)< C(\| u_0 \|_{W^{2, \frac{15}{13}}}+ \| s_0 \|_{L^1}^4 + \delta \int_0^t \| \nabla \sqrt{s} \|_{L^2}^2),
\]
and hence  $u\in L^{5}_{t,x}$ by the Sobolev embedding. 
Also we have
\[ \frac12 \frac{d}{dt} \| \nabla c \|_{L^2}^2 + \| \Delta c \|_{L^2}^2 = \int_{\R^3} u \nabla c \Delta c + \int_{\R^3} e \Delta c \]
\[
\leq \| u \|_{L^5} \| \nabla c \|_{L^{\frac{10}{3}}} \| \Delta c \|_{L^2}+ \frac18 \| \Delta c \|_{L^2}^2 +C
\]
\[
\leq C\| u \|_{L^5} \| \nabla c \|_{L^{2}}^{\frac25} \| \Delta c \|_{L^2}^{\frac85}+ \frac18 \| \Delta c \|_{L^2}^2 +C
\]
\[
\leq  C\| u \|_{L^5}^5 \| \nabla c \|_{L^{2}}^2 +\frac14 \| \Delta c \|_{L^2}^2 +C.
\]
Hence we have $\nabla c \in L^{\infty,2}_{t,x}$ and $\Delta c \in L^{2}_{t,x}$.\\

Also from the equation $\partial_t c -\Delta c =-\nabla \cdot (uc) +e$, we have
\[
\| \nabla c\|_{L^5_{x,t}} \leq C(\| uc \|_{L^5_{x,t}} +1) \leq C (\| u \|_{L^{5}_{x,t}} +1) < \infty.
\]
Hence we have
\[
\frac12 \frac{d}{dt} \int_{\R^3} |s|^2 +\int_{\R^3} |\nabla s |^2 \leq C\left|   \int_{\R^3} s\nabla c \cdot \nabla s\right|
\]
\[
\leq C \|s \|_{L^{\frac{10}{3}} } \| \nabla c\|_{L^5} \| \nabla s \|_{L^2}
\]
\[
\leq \| \nabla s \|_{L^2}^{\frac85} \| s \|_{L^2}^{\frac25} \| \nabla c\|_{L^5} \leq \frac12 \| \nabla s \|_{L^2}^2 +C \| \nabla c\|_{L^5}^5 \| s \|_{L^2}^2.
\]
By using Gronwall's inequality, we have $ s\in L^{ \infty,2}_{t,x}$ and $\nabla s \in L^{2}_{t,x}$. The higher order estimates can be obtained in a similar fashion. A brief sketch of the proof is as follows : as in \cite[Theorem 1]{CKL2}, we can show that 
\[
\| \nabla u \|_{L^{5}_{t,x} } \leq C( \| s \|_{L^{5, \frac{15}{8}}_{t,x}}+1)
\]
and
\[
\| \nabla^2 c \|_{L^{\infty, 2}_{t,x}} + \| \nabla^3 c \|_{L^2_{t,x} } \leq C (\| \nabla u \|_{L^{5}_{t,x}}+1).
\]
Also we can show that if $T^{*}$ is a finite maximal existence time, then 
\[
\| \nabla c \|_{L^{2, \infty}_{t,x}(Q_{T^*})} = \infty.
\]
But, by the previous estimates and the standard continuation argument, we can complete the proof of existence of solution in (ii). For the positive lower bound of the total mass, we can obtain the lower bounds in (ii). The proof of the last part in (ii) is parallel to the proof of Theorem \ref{thm2} (ii) and we omit the details.
\\

\section{Decay estimates in Theorem $2$}
In this section we prove the part \textbf{(iii)} of Theorem \ref{thm-exist-es-NS}. 
 From the equation of $e$, we have
\[
\frac{d}{dt}\int_{\R^d} |e(t) |^p dx +C \| e\|_{L^p(\R^d)}^{\frac{p(d+2)}{d}} \| e \|_{L^{\frac{p}{2}}(\R^d)}^{-\frac{2p}{d}} \leq 0.
\]
Following the same proof for Theorem\ref{thm2} (ii) (see \eqref{eq-May-25-1} below),  we have \eqref{decay-e-22}
\[
\| e(t) \|_{L^p} \lesssim \frac{1}{t^{\left( 1-\frac{1}{p}\right)\frac{d}{2}}},\quad  1<p \leq \infty.
\]

Next  we will obtain the decay estimate of $\|c\|_{L^q}$ when $d=3$.

Noting first that $\norm{c(t)}_{L^1}\le Ct$ for sufficiently large $t$, we have
\[
\frac13 \frac{d}{dt} \int_{\R^3} |c|^3 +t^{-1} \left(\int_{\R^3} |c|^3\right)^{\frac43} \lesssim\frac13 \frac{d}{dt} \int_{\R^3} |c|^3 +\frac49 \int_{\R^3}\left|\nabla c^{\frac32} \right|^2 \lesssim t^{-1}\| c\|_{L^3}^2,
\] 
where we used that $\norm{c}_{L^3}\le \norm{c}^{\frac{1}{4}}_{L^1}\norm{c}^{\frac{3}{4}}_{L^9}$.
Setting $x(t)=\norm{c(t)}_{L^3}$, the above inequality can be rewritten as $x'(t)\le t^{-1}(C_1-C_2x^2(t))$ for some constants $C_1$ and $C_2$. Since it is a separable form of 1st order ordinary differential inequality,
direct computations show that $\| c(t)\|_{L^3}$ is uniformly bounded. Its verification is rather standard, and thus we skip its details.

 We next show that
$\norm{c(t)}_{L^q}$ is uniformly bounded for  $3<q\le\infty$, $d=3$. Let $3<q<\infty$. Using
$\norm{e(t)}_{L^q}\lesssim t^{-\frac{3}{2}(1-\frac{1}{q})}$, we have
\[
\frac{1}{q}\frac{d}{dt}\int_{\R^3}\abs{c}^q+\frac{4}{q^2}\int_{\R^3}\abs{\nabla
c^{\frac{q}{2}}}^2=\int_{\R^3}ec^{q-1}\le
\norm{e}_{L^q}\norm{c}^{q-1}_{L^q}\lesssim
t^{-\frac{3}{2}(1-\frac{1}{q})}\norm{c}^{q-1}_{L^q}.
\]
Noting that
\[
\norm{c}_{L^q}\le
\norm{c}^{\frac{2}{q-1}}_{L^{3}}\norm{c}^{\frac{q-3}{q-1}}_{L^{3q}}\lesssim
\norm{c}^{\frac{q-3}{q-1}}_{L^{3q}}\lesssim
\norm{c^{\frac{q}{2}}}^{\frac{2(q-3)}{q(q-1)}}_{L^{6}}\lesssim
\norm{\nabla c^{\frac{q}{2}}}^{\frac{2(q-3)}{q(q-1)}}_{L^{2}},
\]
we see that
\[
\frac{1}{q}\frac{d}{dt}\int_{\R^3}\abs{c}^q+\frac{4}{q^2}\bke{\int_{\R^3}\abs{c}^q}^{\frac{q-1}{q-3}}\lesssim
t^{-\frac{3}{2}(1-\frac{1}{q})}\norm{c}^{q-1}_{L^q},
\]
which can be rewritten as, denoting $y(t):=\norm{c(t)}_{L^q}$,
\begin{equation}\label{Jan-13-30}
y'(t)+\frac{1}{q^2}(y(t))^{\frac{3(q-1)}{q-3}}\lesssim
t^{-\frac{3}{2}(1-\frac{1}{q})}.
\end{equation}
By solving the differential inequality \eqref{Jan-13-30}, we can deduce  \eqref{Jan-13-40}
\begin{equation*}
\norm{c(t)}_{L^q}\lesssim
t^{-\frac{3}{2}(\frac{1}{3}-\frac{1}{q})},\qquad q>3.
\end{equation*}

Next, we prove \eqref{eq-May-25-2} for $d=2$. 
{First we remind the linear heat kernel estimates in $\R^2$;
\begin{align}\label{kernel}\begin{aligned}
\| \na ^{\al} S(t)f \|_{L^q}&  \le
C t^{- (1/r-1/q) - |\al|/2} \| f \|_{L^r}, \qquad 1 \le r\le
q \le \infty, \\
\| \nabla S(t) f \|_{L^q} &\leq Ct^{-\left(\frac12-\frac{1}{q}\right)} \| \nabla f \|_{L^2}, \quad \quad 2\le q\le \infty,
\end{aligned}\end{align}
and
\begin{align}\label{inho}
\int_0^t \| \nabla S(t-\tau) f(\tau) \|_{L^q} d\tau &\le C \int_0^t \frac{1}{(t-\tau)^{\frac32-\frac{1}{\alpha_0}}}\cdot \frac{1}{\tau^{1-\frac{1}{l}}}d\tau \| f\|_{{\mathcal{K}}_{l}}
\le C \frac{1}{t^{\frac12-\frac{1}{q}}}\| f\|_{{\mathcal{K}}_{l}}
\end{align}
with $1+\frac{1}{q}=\frac{1}{\alpha_0}+\frac{1}{l}$. 
Also we use the following elementary results on the integral for any $a>0,\, b>0$ and $0<a,\, b<1$
\[
\int_0^t \frac{1}{(t-s)^{1-a}} \frac{1}{s^{1-b}} ds \le
\frac{C}{t^{1-(a+b)}} ,\qquad (a>0, \, b>0),
\]
\[
\int_0^{\frac t2} \frac{1}{(t-s)^b} \frac{1}{s^{1-a}} ds  \le
\frac{C}{t^{b-a}}, \quad 
\int_{\frac t2}^t \frac{1}{(t-s)^{1-a}}
\frac{1}{s^{b}} ds  \le \frac{C}{t^{b-a}} \qquad (a>0,  b\ge 0).
\]
}\\
\indent
Let us introduce the function spaces used in \cite{CKL1}.
\begin{align}\label{lowindex}
\begin{aligned}
\norm{c}_{{\mathcal{N}}_q}& :=\sup_{t} t^{\frac{1}{2}-\frac{1}{q}}\norm{\nabla
c}_{L^q(\R^2)},\qquad 2<q<4, \\
\norm{s}_{{\mathcal{K}}_p}&:=\sup_{t}
t^{1-\frac{1}{p}}\norm{s}_{L^p(\R^2)},\qquad \frac{4}{3}<p<2,\\
\norm{e}_{{\mathcal{K}}_l}&:=\sup_{t}
t^{1-\frac{1}{l}}\norm{e}_{L^l(\R^2)},\qquad 1<l\le \infty,\\
\norm{\omega}_{{\mathcal{K}}_{\gamma}}&:=\sup_{t}
t^{1-\frac{1}{\gamma}}\norm{\omega}_{L^{\gamma}(\R^2)},\qquad
1<\gamma<2.
\end{aligned}
\end{align}
From \eqref{decay-e-22}, we already obtain $\norm{e}_{{\mathcal{K}}_l} \leq C(\epsilon_1)$.

Let $\Gamma(x,t) $ be the two dimensional heat kernel, i.e., 
\[
\Gamma(x,t) =(4 \pi t)^{-1}\exp \left(-|x|^2/4t  \right).
\]
If we set $$S(t) u = \int_{\R^2} \Gamma(x-y, t)u(y) dy,$$ then we write the equations as the integral representation.
\[
s(t)=S(t) s_0 -  \int_0^t \nabla S(t-\tau)\cdot\left[ s(\tau) \nabla c(\tau) + u(\tau) s(\tau) \right]\, d\tau -\epsilon \int_0^t S(t-\tau) (s(\tau) \, e(\tau))\, d\tau,
\]
\[
e(\tau) = S(t) e_0 - \int_0^t S(t-\tau) \left[u (\tau) \cdot \nabla e(\tau)+\epsilon s(\tau) e(\tau) \right] d \tau,
\]
\[
c(t) =S(t) c_0 -\int_0^t S(t-\tau) ( u(\tau) \cdot \nabla c(\tau) - e(\tau))\, d\tau,
\]
and
\[
\omega(t) = G(t) \omega_0 +\int_0^t \nabla G(t-\tau)\cdot \left[ (s(\tau)+e(\tau))\nabla^{\perp} \phi- u(\tau)\omega(\tau)     \right] \, d\tau.
\]
Using the estimate of the heat kernel, we obtain 
\[
\| s(t) \|_{L^p} \lesssim t^{-1+\frac{1}{p}} \| s_0 \|_{L^{1}} + \chi \int_0^t \| \nabla S(t-\tau) ( s(\tau) \nabla c(\tau)) \|_{L^p} d\tau
\]
\[
+\int_0^t \| \nabla S(t-\tau) (u(\tau) s(\tau)) \|_{L^p} d\tau +\epsilon \int_0^t \| S(t-\tau) (s(\tau) e(\tau) ) \|_{L^p} d\tau
\]
\[
\lesssim t^{-1+\frac{1}{p}} \| s_0 \|_{L^1} +\chi \int_0^t \frac{1}{(t-\tau)^{\frac{3}{2}-\frac{1}{\alpha}}} \| s(\tau) \|_{L^p} \| \nabla c (\tau) \|_{L^q} d\tau
\]
\[
+\int_0^t \frac{1}{(t-\tau)^{\frac{3}{2}-\frac{1}{\alpha'}}}\| u(\tau)\|_{L^{\frac{2r}{2-r}}} \| s(\tau) \|_{L^p} d\tau + \epsilon\int_0^t \frac{1}{(t-\tau)^{\left(1-\frac{1}{\beta}\right)}} \| s(\tau) \|_{L^p}\|e(\tau) \|_{L^l} d\tau
\]
\[
:=  t^{-1+\frac{1}{p}} \| s_0 \|_{L^1}+I_1+I_2+I_3,
\]
where $1+\frac{1}{p}=\frac{1}{\alpha} +\frac{1}{p} +\frac{1}{q}$, $1+\frac{1}{2}-\frac{1}{r}=\frac{1}{\alpha'}$,
and $1+\frac{1}{p}=\frac{1}{\beta}+\frac{1}{p}+\frac{1}{l}$.
 We estimate $I_1$, $I_2$ and $I_3$ as follows:
\[
I_1 \lesssim \chi\int_0^t \frac{1}{(t-\tau)^{\frac32-\frac{1}{\alpha}}} \cdot \frac{1}{\tau^{\frac32-\frac{1}{p}-\frac{1}{q}}} d\tau \| s \|_{{\mathcal{K}}_{p}} \| c \|_{{\mathcal{N}}_{q} } \lesssim
\frac{\chi}{t^{1-\frac{1}{p}}} \|s \|_{{\mathcal{K}}_{p}} \| c \|_{{\mathcal{N}}_{q} }, \]
\[
I_2 \lesssim \int_0^t \frac{1}{(t-\tau)^{\frac32-\frac{1}{\alpha'}} }\cdot \frac{1}{\tau^{2-\frac{1}{r}-\frac{1}{p}}} d\tau \| \omega \|_{{\mathcal{K}}_r} \| s \|_{{\mathcal{K}}_p} \lesssim \frac{1}{t^{1-\frac{1}{p}}} \| \omega \|_{{\mathcal{K}}_r}  \| s \|_{{\mathcal{K}}_p},
\]
and
\[
I_{3} \lesssim \epsilon\int_0^t \frac{1}{(t-\tau)^{1-\frac{1}{\beta}}} \cdot \frac{1}{\tau^{2-\frac{1}{p}-\frac{1}{l}}} d\tau \| s\|_{{\mathcal{K}}_p} \| e \|_{{\mathcal{K}}_{l}}\lesssim \frac{\epsilon}{t^{1-\frac{1}{p}}} \| s\|_{{\mathcal{K}}_p} \| e \|_{{\mathcal{K}}_{l}},
\]
{where we use the embedding $\| u\|_{L^{\frac{2r}{2-r}} }\lesssim \| \omega\|_{L^r}$, hence $1 <r <2$ is required.}
Therefore, we 
deduce that{ for any exponent $ p, q, r, l$ in \eqref{lowindex}}
\bq\label{180811-1}
\| s \|_{{\mathcal{K}}_{p}} \leq C\| s_0 \|_{L^1} +C \| s\|_{{\mathcal{K}}_{p}}(\chi\| c\|_{{\mathcal{N}}_q} + \| \omega \|_{{\mathcal{K}}_r} + \epsilon \| e \|_{{\mathcal{K}}_{l}}).
\eq
Similarly, we obtain
\bq\label{180811-2}
\| e \|_{{\mathcal{K}}_{l}} \leq C\| e_0 \|_{L^1} +C \| e\|_{{\mathcal{K}}_{l}}( \| \omega \|_{{\mathcal{K}}_r} + \epsilon \| s \|_{{\mathcal{K}}_{p}}).
\eq
By applying \eqref{kernel}, \eqref{inho} to the $c$ equation
we  easily deduce that
\bq\label{180811-3}
\| c\|_{{\mathcal{N}}_{q}} \leq C \| \nabla c_0 \|_{L^2} +C \| c \|_{{\mathcal{N}}_{q}}\| \omega \|_{{\mathcal{K}}_{r}} +C_{*} \| e\|_{{\mathcal{K}}_l}.
\eq
Next by similar computaions as in \cite[Lemma 3]{CKL1}, we obtain that
\bq\label{180811-4}
\| \omega \|_{{\mathcal{K}}_r} \leq C \| \omega_0 \|_{L^1} +C\| \nabla \phi \|_{L^2}( \| s \|_{{\mathcal{K}}_p}+  \|e \|_{{\mathcal{K}}_l}) +C \| \omega \|_{{\mathcal{K}}_r}^2.
\eq
Here we set $M_{1} :=C_{*}$ and $M_{2}=C\| \nabla \phi \|_{L^2}$, where $C_{*}$ and $C\| \nabla \phi\|_{L^2}$ are the constants in \eqref{180811-3} and \eqref{180811-4} respectively.  
{Indeed,
\[
\norm{\omega (t)}_{L^r} \lesssim t^{-1+\frac{1}{r}}
\norm{\omega_0}_{L^1}+\int_0^t \norm{\nabla
G(t-\tau)(s+e)(\tau)\nabla \phi }_{L^{r}}+\int_0^t \norm{\nabla
G(t-\tau) u(\tau)\omega(s)}_{L^r}
\]
\[
\lesssim t^{-1+\frac{1}{r}} \norm{\omega_0}_{L^1}+\int_0^t
\frac{1}{(t-\tau)^{\frac32-\frac{1}{\alpha}}}\norm{s(\tau)}_{L^p}
\norm{\nabla \phi }_{L^{2}}+\frac{1}{(t-\tau)^{\frac32-\frac{1}{\beta}}}\norm{e(\tau)}_{L^l}
\norm{\nabla \phi }_{L^{2}}
\]
\[
+\int_0^t \frac{1}{(t-\tau)^{\frac32-\frac{1}{\alpha'}}}
\norm{u}_{L^{\frac{2r}{2-r}}} \norm{\omega}_{L^r}
=t^{-1+\frac{1}{r}} \norm{\omega_0}_{L^1}+J_1+J_2+J_3,
\]
where $\frac{1}{r}=\frac{1}{\alpha} +\frac{1}{p} -\frac{1}{2}$, $\frac{1}{r}=\frac{1}{\beta} +\frac{1}{p} -\frac{1}{2}$ and
$\frac{1}{\alpha'}=\frac32-\frac{1}{r}$. Similar estimates as above
yield that
\[
J_1 \lesssim \int_0^t \frac{1}{(t-\tau)^{\frac32-\frac{1}{\alpha}}}
\frac{1}{\tau^{1-\frac{1}{p}}} ds \norm{\nabla \phi
}_{L^{2}}\norm{s}_{\mathcal{K}_p} \lesssim
\frac{1}{t^{1-\frac{1}{r}}}\norm{\nabla \phi
}_{L^{2}}\norm{s}_{\mathcal K_p}
\]
and
\[
J_2 \lesssim \frac{1}{t^{1-\frac{1}{r}}}\norm{\nabla \phi
}_{L^{2}}\norm{e}_{\mathcal K_l}.
\]
}
{On the other hand, via $\norm{u(t)}_{L^s}\lesssim
\norm{\omega(t)}_{L^r}$ with $1/r=1/s+1/2$, we obtain
\[
J_3 \lesssim \int_0^t \frac{1}{(t-s)^{\frac32-\frac{1}{\alpha'}}}
\frac{1}{s^{2(1-\frac{1}{r})}} ds \norm{\omega }_{\mathcal K_r}^2
\lesssim\frac{1}{t^{1-\frac{1}{r}}}\norm{\omega }_{\mathcal K_r}^2.
\]
Thus, we have \eqref{180811-4}.
}
Multiplying \eqref{180811-1} and \eqref{180811-2} with $2M_2$ and $2(M_1+M_2)$ ($M_1$ and $M_2$ are large constants, which are larger than $\| \nabla \phi \|_{L^2}$ and $C_*$), respectively, and summing up above estimates, we have
\[
M_2 \| s\|_{{\mathcal{K}}_p} +(M_1+M_2) \| e \|_{{\mathcal{K}}_l}+\| c\|_{{\mathcal{N}}_q} +\| \omega \|_{{\mathcal{K}}_r}
\]
\[
\leq C ( \| s_0 \|_{L^1} + \| e_0 \|_{L^1}+\| \omega_0 \|_{L^1} +\| \nabla c_0 \|_{L^2}) + \left(\| s\|_{{\mathcal{K}}_p}+\| e\|_{{\mathcal{K}}_l}+ \|\omega \|_{{\mathcal{K}}_r }+ \| c\|_{{\mathcal{N}}_q}\right)^2.
\]
Under the smallness assumption, we have
\begin{align}\label{lowrange}
\norm{(s, e, \omega)}_{\mathcal{K}_{p,l,r}}+ \| c\|_{{\mathcal{N}}_q}\lesssim
\norm{s_0}_{L^1}+\| e_0 \|_{L^1}+\norm{\nabla c_0}_{L^2}+\norm{\omega_0}_{L^1}\lesssim \epsilon_1.
\end{align}
{Now we extend the range of $p, r$ of $ \|s\|_{\calK_p}$, $\|\omega\|_{\calK_r}$ and consider $\|c\|_{\calN_{\infty}}$ such that}
\begin{align*}
\norm{ s }_{{\mathcal{K}}_{p}} &:= \sup_{t \geq 0} t^{1-\frac{1}{p}} \| s(t) \|_{L^{p}}, \quad 2\leq p  \le \infty,\\
\norm{c }_{{\mathcal{N}}_{\infty}}& := \sup_{t \geq 0} t^{\frac12} \| \nabla c(t) \|_{L^{\infty}}, \\
\norm{ \omega }_{{\mathcal{K}}_{r}} &:= \sup_{t \geq 0} t^{1-\frac{1}{r}} \| \omega(t) \|_{L^{r}},\quad 1<r<\infty.
\end{align*}
Since $\int_0^t S(t-\tau)(se)(\tau) d\tau$ is always nonnegative, we have
\[
\| s \|_{L^\infty}(t) \lesssim  t^{-1} \| s_0 \|_{L^{1}} + \chi \int_0^t \| \nabla S(t-\tau) ( s(\tau) \nabla c(\tau)) \|_{L^\infty} d\tau
\]
\[
+\int_0^t \| \nabla S(t-\tau) (u(\tau) s(\tau)) \|_{L^\infty} d\tau 
:= t^{-1+\frac{1}{p}} \| s_0 \|_{L^{1}}+I_1+I_2.
\]
$I_1$ and $I_2$ can be estimated as follows:
\[
I_1 \lesssim \int_0^{\frac{t}{2}}\frac{1}{(t-\tau)^{\frac32}} \| s \nabla c\|_{L^1} (\tau) d\tau+\int_{\frac{t}{2}}^{t} \frac{1}{(t-\tau)^{\frac12}}\|s \nabla c \|_{L^\infty}(\tau) d\tau
\]
\[
\lesssim \int_0^{\frac{t}{2}} \frac{1}{(t-\tau)^{\frac32}}\| s\|_{L^1} \| \nabla c \|_{L^{\infty}} d\tau +\int_{\frac{t}{2}}^t \frac{1}{(t-\tau)^{\frac12}} \| s\|_{L^\infty} \| \nabla c\|_{L^{\infty}} d\tau
\]
\[
\lesssim \frac{\epsilon_1}{t^{1}} \|  c \|_{{\mathcal{N}}_{\infty}}+\frac{1}{t^{1}} \| s \|_{{\mathcal{K}}_\infty} \| c \|_{{\mathcal{N}}_{\infty}},
\]
and
\[
I_2 \lesssim \int_0^{\frac{t}{2}} \frac{1}{(t-\tau)^{\frac32}} \| us \|_{L^1} d\tau +\int_{\frac{t}{2}}^{t} \frac{1}{(t-\tau)^{\frac32-\frac{1}{2^{-}}}} \| us \|_{L^{2^{+}}} d\tau
\]
\[
\lesssim \int_0^{\frac{t}{2}} \frac{1}{(t-\tau)^{\frac32}} \| u \|_{L^{2^{+}}} \| s \|_{L^{2^{-}}} + \int_{\frac{t}{2}}^{t} \frac{1}{(t-\tau)^{\frac32-\frac{1}{2^{-}}}} \| u \|_{L^{2^{+}}} \| s \|_{L^{\infty}}
\]
\[
\lesssim \frac{1}{t^{\frac32}} \int_0^{\frac{t}{2}}\| \omega \|_{L^{\frac{\alpha}{\alpha-1}}} \| s \|_{L^{2^{-}}} +\int_{\frac{t}{2}}^{t} \frac{1}{(t-\tau)^{\frac32-\frac{1}{2^{-}}}}\| \omega \|_{L^{\frac{\alpha}{\alpha-1}}}  \| s \|_{L^{\infty}}
\]
\[
\lesssim \frac{1}{t} \| s \|_{\mathcal{K}_{2^{-}}} \| \omega \|_{\mathcal{K}_{\frac{\alpha}{\alpha-1}}} + \frac{1}{t} \| s\|_{\mathcal{K}_{\infty}}\| \omega \|_{\mathcal{K}_{\frac{\alpha}{\alpha-1}}}
\lesssim \frac{\epsilon_1^2}{t}+\frac{\epsilon_1}{t}\| s\|_{\mathcal{K}_{\infty}}  ,
\]
where $\alpha$ and $p$ satisfy $2<\alpha<p$ and $2^{+}$ and $2^{-}$ satisfy $\frac{1}{2^{+}}=\frac12-\frac{1}{\alpha}+\frac{1}{p},$  and $\frac{1}{2^{-}}=\frac12+\frac{1}{\alpha}.$

Adding these estimates, we obtain that
\[
\|s \|_{{\mathcal{K}}_{\infty}} \lesssim \| s_0 \|_{L^1}  +\epsilon_1 \|  c \|_{{\mathcal{N}}_{\infty}} + \| s\|_{{\mathcal{K}}_p}( \| c\|_{{\mathcal{N}}_{\infty}}  +\epsilon_1)+\epsilon_1^2.
\]
Using the similar methods with above and the estimates in \cite{CKL1}, we have
\[
\|  c \|_{{\mathcal{N}}_{\infty}} \lesssim \| e_0 \|_{L^1}  + \epsilon_1^2+\| e\|_{{\mathcal{K}}_{\infty}} +\epsilon_1  \|  c \|_{{\mathcal{N}}_{\infty}}.
\]
{Indeed,
\[
\norm{\nabla c}_{L^{\infty}}(t)\lesssim
\frac{1}{t^{\frac{1}{2}}}\norm{c_0}_{L^{\infty}}
+\int_0^t\norm{\nabla S(t-\tau) e}_{L^{\infty}}(\tau)d\tau
\]
\[
+\int_0^t\norm{\nabla S(t-\tau)(u\nabla c)}_{L^{\infty}}(\tau)d\tau
=\frac{1}{t^{\frac{1}{2}}}\norm{c_0}_{L^{\infty}}+J_1+J_2.
\]
Firstly, we estimate $J_1$.
\begin{align}\label{j1c}\begin{aligned}
J_1&\lesssim \int_0^{t/2}\frac{1}{(t-\tau)^{\frac{3}{2}}}\norm{e(\tau)}_{L^1}ds+\int_{t/2}^t\frac{1}{(t-\tau)^{\frac{1}{2}}}\norm{e(\tau)}_{L^{\infty}}d\tau
\\
&
\lesssim
\frac{1}{t^{\frac{1}{2}}}\norm{e}_{L^1}
+\frac{1}{t^{\frac{1}{2}}}\norm{e}_{\calK_{\infty}(\R^2)}\lesssim
\frac{\epsilon_1}{t^{\frac{1}{2}}}
+\frac{1}{t^{\frac{1}{2}}}\norm{e}_{\calK_{\infty}(\R^2)}.
\end{aligned}\end{align}
Before we estimate $J_2$, we set $1/4^+=1/4-1/\beta$ and
$1/4^-=1/4+1/\beta$ with $\beta>4$. We then estimate $J_2$.
\begin{align}\label{j2c}\begin{aligned}
J_2&\lesssim \int_0^{t/2}\frac{1}{t-\tau}\norm{u\nabla
c}_{L^2}d\tau+\int_{t/2}^t\frac{1}{(t-\tau)^{\frac{3}{2}-\frac{1}{2^{-}}}}
\norm{u\nabla c}_{L^{2^{+}}}(\tau)d\tau
\\
&\lesssim \frac{1}{t}\int_0^{t/2}\norm{u}_{L^{4^+}}\norm{\nabla
c}_{L^{4^-}}d\tau+\int_{t/2}^t\frac{1}{(t-\tau)^{\frac{3}{2}-\frac{1}{2^{-}}}}
\norm{u}_{L^{2^{+}}}\norm{\nabla c}_{L^{\infty}}(\tau)d\tau
\\
&
\lesssim
\frac{1}{t}\int_0^{t/2}\norm{\omega}_{L^{\frac{4\beta}{3\beta-4}}}\norm{\nabla
c}_{L^{4^-}}ds+\int_{t/2}^t\frac{1}{(t-s)^{\frac{3}{2}-\frac{1}{2^{-}}}}
\norm{\omega}_{L^{\frac{\alpha}{\alpha-1}}}\norm{\nabla
c}_{L^{\infty}}(\tau)d\tau
\\&
\lesssim
\frac{1}{t^{\frac{1}{2}}}\norm{\omega}_{\calK_{\frac{4\beta}{3\beta-4}}(\R^2)}
\norm{ c}_{\calN_{4^-}(\R^2)}+\frac{1}{t^{\frac{1}{2}}}
\norm{\omega}_{\calK_{\frac{\alpha}{\alpha-1}}(\R^2)} \norm{
c}_{\calN_{\infty}(\R^2)}\lesssim\frac{\epsilon^2_1}{t^{\frac{1}{2}}}
+\frac{\epsilon_1}{t^{\frac{1}{2}}}\norm{
c}_{\calN_{\infty}(\R^2)},
\end{aligned}\end{align}
where the estimates for the low range of $\| \omega\|_{\calK_p}$, $\| c\|_{\calN_q}$ \eqref{lowrange} is used
and $2+, 2-, \al$ are same exponents as for $I_2$ before.
 Combining
\eqref{j1c}and \eqref{j2c}, we have
\begin{equation*}
\norm{\nabla c}_{L^{\infty}}(t)\lesssim \frac{1}{t^{\frac{1}{2}}}\| c_0 \|_{L^{\infty}}+\frac{\epsilon_1^2}{t^{\frac{1}{2}}}
+\frac{1}{t^{\frac{1}{2}}}\norm{e}_{\calK_{\infty}(\R^2)}
+\frac{\epsilon_1}{t^{\frac{1}{2}}}\norm{
c}_{\calN_{\infty}(\R^2)}.
\end{equation*}
}
Next, we estimate the vorticity  for $2 \leq r< \infty$.
\[
\| \omega (t) \|_{L^r} \lesssim t^{-1+\frac{1}{r}}\| \omega_0 \|_{L^1} +\int_0^t \| \nabla ^{\perp} G(t-\tau) (s \nabla \phi)(\tau) \|_{L^r} d\tau
\]
\[
+\int_0^t \| \nabla ^{\perp} G(t-\tau) (e \nabla \phi)(\tau) \|_{L^r} d\tau+ \int_0^t \| \nabla  G(t-\tau) (u \omega)(\tau) \|_{L^r} d\tau 
\]
\[
= t^{-1+\frac{1}{r}}\| \omega_0 \|_{L^1}+K_1+K_2 +K_3.
\]
If we consider $r > 2$, then we obtain
\[
K_1 \lesssim \int_0^{t/2} \frac{1}{(t-\tau)^{\frac32-\frac{1}{r}}} \| s(\tau)\|_{L^1}^{\frac12} \| s(\tau) \|_{L^{\infty}}^{\frac12} \| \nabla \phi \|_{L^2} +\int_{t/2}^{t} \frac{1}{(t-\tau)^{1-\frac{1}{r}}} \| s(\tau) \|_{L^{\infty}} \| \nabla \phi \|_{L^2}
\]
\[
\lesssim \frac{\epsilon_1}{t^{1-\frac{1}{r}}} +\frac{1}{t^{1-\frac{1}{r}}}\| s \|_{\mathcal{K}_{\infty}}.
\]
Similarly, we have
\[
K_2 \lesssim \frac{\epsilon_1}{t^{1-\frac{1}{r}}} +\frac{1}{t^{1-\frac{1}{r}}}\| s \|_{\mathcal{K}_{\infty}}.
\]
If the exponents $r^{*}$, $\tilde{r}$ are defined by $\frac{1 }{r^{*}}= \frac12 -\frac{1}{r}$ and $\frac{1}{r^{*}}=\frac{1}{\tilde{r}}-\frac12$, then we estimate
\[
K_3 \lesssim \int_0^{\frac{t}{2}} \frac{1}{(t-\tau)^{\frac32- \frac{1}{2^{-}}}}\| u \|_{L^{2^{+}}}\| \omega \|_{L^r} + \int_{\frac{t}{2}}^{t} \frac{1}{(t-\tau)^{1-\frac{1}{r}}} \| u \|_{L^{r^*}}\| \omega \|_{L^r} 
\]
\[
\lesssim \frac{1}{t^{1-\frac{1}{r}}} \| \omega \|_{\mathcal{K}_{\frac{\alpha}{\alpha-1}}}\| \omega \|_{\mathcal{K}_r}+\frac{1}{t^{1-\frac{1}{r}}} \| \omega \|_{\mathcal{K}_{\tilde{r}}}\| \omega \|_{\mathcal{K}_r} \lesssim \frac{\epsilon_1}{t^{1-\frac{1}{r}}} \| \omega \|_{\mathcal{K}_r}.
\]
Thus, we have
\[
\| \omega \|_{\mathcal{K}_r} \lesssim \epsilon_1 + \| s \|_{\mathcal{K}_{\infty}} + \| e \|_{\mathcal{K}_{\infty}} +\epsilon_1 \| \omega \|_{\mathcal{K}_{r}}.
\]
By collecting all the estimates in the above, we find that
\[
\norm{s}_{{\mathcal{K}}_\infty}+ \norm{e}_{{\mathcal{K}}_\infty}+\norm{c}_{{\mathcal{N}}_{\infty}} +\| \omega \|_{\mathcal{K}_{r}}\lesssim \epsilon_1.
\]
This completes the proof.
\hfill\fbox{}\par\vspace{.2cm}

\section*{Acknowledgements}
M. Chae's work is partially supported by NRF-2015R1C1A2A01054919. Kyungkeun Kang's work is partially supported by NRF-2017R1A2B400648. Jihoon Lee's work is partially supported by NRF- 2016R1A2B3011647 and SSTF-BA1701-05.

\begin{equation*}
\left.
\begin{array}{cc}
{\mbox{Myeongju Chae}}\qquad&\qquad {\mbox{Kyungkeun Kang}}\\
{\mbox{Department of Applied Mathematics }}\qquad&\qquad
 {\mbox{Department of Mathematics}} \\
{\mbox{Hankyong National University
}}\qquad&\qquad{\mbox{Yonsei University}}\\
{\mbox{Ansung, Republic of Korea}}\qquad&\qquad{\mbox{Seoul, Republic of Korea}}\\
{\mbox{mchae@hknu.ac.kr }}\qquad&\qquad {\mbox{kkang@yonsei.ac.kr }}
\end{array}\right.
\end{equation*}
\begin{equation*}
\left.
\begin{array}{cc}
{\mbox{Jihoon Lee}}\\
{\mbox{Department of Mathematics}} \\
{\mbox{Chung-Ang University}}\\
{\mbox{Seoul, Republic of Korea}}\\
{\mbox{jhleepde@cau.ac.kr }}
\end{array}\right.
\end{equation*}

\end{document}